\newtheorem{theorem}{Theorem}[section]
\newtheorem{corollary}{Corollary}[section]
\newcommand{\norm}[1]{\left\Vert#1\right\Vert}
\newcommand{\abs}[1]{\left\vert#1\right\vert}
\newcommand{\p}{\partial}
\begin{document}

\title{Energy norm error estimates  and convergence analysis for a  stabilized  
     Maxwell's equations in conductive media}

\author{E. Lindström \thanks{Eric Lindström, Email: erilinds@chalmers.se} \and L. Beilina \thanks{Larisa Beilina, Email: larisa@chalmers.se \\ Department of Mathematical Sciences, Chalmers University of Technology and University of Gothenburg, SE-412 96 Gothenburg Sweden}}

\date{\today}
%
%
\maketitle

\graphicspath{
  {FIGURES/} 
}

\begin{abstract}
  The aim of this article is to investigate the well-posedness,
  stability and convergence of solutions to the time-dependent
  Maxwell's equations for electric field in conductive media in
  continuous and discrete settings. The situation we consider would
  represent a physical problem where a subdomain is emerged in a
  homogeneous medium, characterized by constant dielectric
  permittivity and conductivity functions. It is well known that in
  these homogeneous regions the solution to the Maxwell's equations
  also solves the wave equation which makes calculations very
  efficient. In this way our problem can be considered as a coupling
  problem for which  we derive stability and convergence analysis. A number
  of numerical examples validate theoretical convergence rates of the
  proposed stabilized explicit finite element scheme.
\end{abstract}

{\small \noindent \textbf{Keywords: }\emph{Maxwell's equations, finite element method, stability, a priori error analysis, energy error estimate, convergence analysis} \centering}

{\small \noindent \textbf{Mathematics Subject Classification (2010):} 65N15, 65N21, 65N30, 35Q61 }

\section{Introduction}

In this paper we consider the time-dependent Maxwell's equations in a
bounded, simply connected spatial domain $\Omega$. This domain is
divided into two subdomains, one outer were the dielectric
permittivity and conductivity are constant functions, and one inner
one were they are allowed to vary but are still bounded functions.

One important (and of special interest for the authors) consequence of
the results developed in this paper are applications of Maxwell's
equations to solutions of \textit{Coefficient Inverse Problems}
(CIPs). In \cite{TBKF1,TBKF2} one can read about inverse
problems applied to imaging of buried objects, and in
\cite{BL1,BL2,BondestaB} inverse problems are used for medical imaging. In
the latter case the problem is to reconstruct the dielectric
permittivity and conductivity functions of an anatomically realistic
phantom of a breast tissue.
The dielectric properties of different tissue types in a breast are
experimentally measured and known \cite{dielprop}, but their
distribution inside every particular breast tissue is unknown.  Such
a scenario is one case where the domain decomposition can be an useful
tool for solution of electromagnetic CIPs when the goal is
determination of dielectric properties of the object from boundary measurements of
the scattered electric field.

Under certain
circumstances, it is known that the solution to the Maxwell's
equations also solves the wave equation, which is more studied and
understood, see \cite{AB1,AB2,BMaxwell,BL1,BR1}.
In \cite{BR1}, a finite element analysis shows stability
  and consistency of the stabilized  finite element method for the
  solution of  Maxwell's equations in non-conductive media, and
in \cite{AB1} authors investigated  a stabilized  domain decomposition
finite element method for the time
harmonic Maxwell's equations. Stability and convergence analysis of a Domain Decomposition FE/FD
 method for  time-dependent Maxwell’s equations  was presented in \cite{AB2}.
In our knowledge, all previous cited
works consider non-conductive media, and the research concerning
time-dependent Maxwell's equation for electric field in conductive media, when both dielectric
permittivity and conductivity are space-dependent functions, are
missing.

The stability and well-posedness of the wave equation are well
understood and studied \cite{evans,joly}. Certain model of wave equations
have also been used to model inverse problems, see
\cite{BK,carlemanwaves,GG}. The progression to Maxwell's equations is
arguably natural, since the system has wave-like properties. However,
some complications occur from the presence of the double curl
operator.
Another theoretical complication is that when one analyzes the
corresponding bilinear form induced by the variational form, one can
see that it is not coercive. This coercivity is often critical in
proofs concerning existence and uniqueness of solutions.  The
additional novelty of the presented work is in how we deal with
coercivity of the bilinear form. Since the bilinear form with presence
of time-dependent terms is non-coercive, we split it and separate
terms with derivatives in time in order to derive the coercivity for
the remaining spatial part of the bilinear form using some minor
restrictions on the gradient of the permittivity function.  We use
then coercivity of the spatial part of the bilinear form in the proof
of a priori error estimate. Derivation of coercivity  of the entire scheme is a topic of an ongoing research.

To align the results with implementations of $P1$-methods, we
introduce a slightly altered, stabilized problem. Otherwise, these
methods can lead to spurious solutions (see
\cite{Assous,BMaxwell,Ciarlet,Ciarlet_Jamelot,E_Jamelot,Jiang1,Jiang2,Jin,div_cor,PL})
and is well-known that theoretically, divergence free edge edge
elements are a better fit, see
\cite{Cohen,Ern_Guermond2,Monk,MP,Nedelec}. To read more about various
  numerical methods for Maxwell equations and more details about the
complications, see
\cite{Arnold_Brezzi,Assous,AB1,Bonnet_etal,Dauge_Cost1,Dauge_Cost2,Ern_Guermond2,delta,Monk,div_cor}
and references therein.
Naturally, since the theoretical results of this work have importance
for numerical implementations, we also present analysis of the
corresponding discrete problem to our original pmodel.

An outline of this paper is as follows. 
In Section \ref{sec:model} we introduce the mathematical model and present the  stabilized
   problem 
   for the time-dependent Maxwell's equations  in conductive media.
   In Section  \ref{sec:fem} we state the variational problem 
  for the stabilized model and 
  formulate the finite element scheme.
  Section \ref{sec:energynorm} is devoted to the  energy norm error analysis
  and section \ref{sec:apriori}  presents derivation of  a priori error estimates.
In Section   \ref{sec:numex} are  performed
numerical   convergence tests illustrating theoretical results of  this paper.
Finally, in Section  \ref{sec:conclusion} we conclude the results of the paper.

\section{The mathematical model}

\label{sec:model}

Let us consider
the initial value problem for the 
  electric field $E\left( x,t\right)=\left( E_{1},E_{2},E_{3}\right) \left( x,t\right)$,  
  $x \in\mathbb{R}^{3}$,  $t \in [0, T]$, for time-dependent
   Maxwell's equations  in conductive media, under the 
assumptions  that the dimensionless relative magnetic permeability of 
the medium is $\mu_r \equiv 1$:
\begin{equation}\label{E_gauge1}
\begin{split}
 \frac{1}{c^2} \varepsilon_r(x) \frac{\partial^2 E}{\partial t^2} +  \nabla \times \nabla \times E  &= - \mu_0 \sigma(x) 
 \frac{\partial E}{\partial t} -j, \\
  \nabla \cdot(\varepsilon E) &= 0, \\
  E(x,0) = f_0(x), ~~~\frac{\partial E}{\partial t}(x,0) &= f_1(x) ,~~ x \in \mathbb{R}^{3},t\in (0,T].
\end{split}
\end{equation}
Here, $\varepsilon_r(x) = \varepsilon(x)/\varepsilon_0$
is the dimensionless relative dielectric permittivity, 
$\sigma(x)$ is the
electric conductivity function; $\varepsilon_0$, and 
$\mu_0$ are the permittivity and permeability of the free space,
respectively, and $c=1/\sqrt{\varepsilon_0 \mu_0}$ is the speed of
light in free space and $j$ is a given source function.

To solve the problem (\ref{E_gauge1}) numerically, we consider it in a
bounded simply connected space domain $ \Omega\subset \mathbb{R}^{n}, \, n=2,3$ with
boundary $\Gamma$ and time domain $J = (0,T)$.
In this work we will study the problem (\ref{E_gauge1})  in a special framework:  we
decompose the space domain $ \Omega$ into two subdomains such that
$\Omega = \Omega_{\rm 1} \cup \Omega_{\rm 2}, \Omega_{\rm 1}
\subset \Omega$  and $ \Omega_{\rm 1} =  \Omega_{\rm IN} \cup \Omega_{\rm OUT} $.
We   assume that for some known constants $d_1 > 1, d_2 > 0$  chosen such that $d_1 > d_2$, the functions
$\varepsilon, \sigma \in C^{2}\left( \Omega \right)$ 
satisfy  following conditions:
\begin{equation} \label{2.3}
\begin{split}
  \varepsilon_r(x) &\in \left[ 1,d_1\right],\quad
  \sigma(x) \in \left[0,d_2\right], ~\text{ for }x\in  \Omega _{\rm IN}, \\
  ~  ~ \varepsilon_r(x) &=1, \quad \sigma(x) = 0\quad  \text{ for }x\in  \Omega _{\rm 2} \cup \Omega _{\rm OUT}.
\end{split}
\end{equation}
We refer to \cite{BL1} and references therein for  justification and  possible choice of these  coefficients.

We observe that conditions \eqref{2.3} on $\varepsilon$ and $\sigma$ 
together with the relation 
\begin{equation}\label{divfree}
\nabla \times \nabla \times E = \nabla (\nabla
\cdot E) - \nabla \cdot ( \nabla E), 
\end{equation}
and divergence free condition $\nabla \cdot (\varepsilon E) = 0$ in $\Omega_{\rm 2}$,
make equations in 
(\ref{E_gauge1}) independent of each others in  $\Omega _{\rm 2}$ such that
in  $\Omega _{\rm 2}$,  
 we  solve the system of  uncoupled  wave equations: 
\begin{equation} \label{6.11}
  \frac{\partial^2 E}{\partial t^2}-\Delta E= 0, ~~~(x,t) \in  \Omega_{\rm 2} \times ( 0,T]. 
\end{equation}

In \cite{BR1}, a finite element analysis shows stability
  and consistency of the stabilized  finite element method for the
  solution of (\ref{E_gauge1}) with $\sigma(x)=0$.
In \cite{AB1}   a stabilized linear, domain decomposition
finite element method for the time
harmonic Maxwell's equations  was studied.
  In the current study
  we show stability and convergence analysis of the finite element method for  solution
   of (\ref{E_gauge1})
  under the condition \eqref{2.3} on $\varepsilon$  and $\sigma$.

  Let $\Omega_T = \Omega \times (0,T), \Gamma_T = \Gamma \times (0,T)  $.
Let us
introduce the following spaces of real valued  functions
\begin{equation}\label{spaces}
  \begin{split}
    H_E^2(\Omega_T) &:= \{  w \in H^2(\Omega_T):  w( \cdot , 0) = f_0,  \frac{\partial w}{\partial t}( \cdot , 0) = f_1 \}, \\
H_E^1(\Omega_T) &:= \{  w \in H^1(\Omega_T): w( \cdot , 0) = f_0,  \frac{\partial w}{\partial t}( \cdot , 0) = f_1\}.
\end{split}
\end{equation}
In this paper we study the following  stabilized initial boundary value problem  setting
${\textbf H_E^2(\Omega_T)} =  [H_E^2(\Omega_T)]^3$:  find
$E \in  {\textbf H_E^2(\Omega_T)}$  such that
\begin{equation}\label{eq1} 
\left\{
\begin{array}{ll}
    \varepsilon \frac{\partial^2 E}{\partial t^2}  - \triangle E - \nabla (\nabla \cdot ((\varepsilon -1)E)) =  - \sigma(x) 
 \frac{\partial E}{\partial t} -j& \mbox{ in } \Omega_T, \\
    E(\cdot,0) = f_0(\cdot), \mbox{ and } \partial_t E (\cdot,0) = f_1(\cdot) & \mbox{ in } \Omega, \\
     E = 0 & \mbox{ on } \Gamma_T. \\  
  \end{array}
  \right. 
\end{equation}

Here, the divergence free condition $\nabla \cdot(\varepsilon E)=0$ is hidden in the first equation of system \eqref{eq1}.

\section{Finite Element Discretization}

\label{sec:fem}

Throughout the paper
we denote the inner product in space
of $[L^2(\Omega)]^d, d \in \{1,2,3\},$
by $(\cdot,\cdot)$, and the corresponding norm by $\parallel \cdot
\parallel$.

Let us  define the following $L_2$ scalar products used in the analysis: 
\begin{equation}
  \begin{split}
    (u,v) &:=    (u,v)_\Omega  = \int_{\Omega} u  v~ d{\bf x},\quad
    ((u,v)) :=  ((u,v))_{\Omega_T} =  \int_0^T \int_{\Omega} u  v~ d{\bf x} dt,\quad  
    \\
    \langle u, v \rangle &:=   \langle u, v \rangle_\Gamma =  \int_\Gamma  u  v~ d{\sigma},\quad
    \langle\langle u, v \rangle\rangle := \langle\langle u, v \rangle\rangle_{\Gamma_T} = \int_0^T  \int_\Gamma  u v~ d{\sigma} dt.
  \end{split}
  \end{equation}
Additionally, we define   the $\omega$-weighted $L^2(\Omega)$ norm 
\begin{equation}
  \begin{split}
\| u \|_{\omega}:=\sqrt{\int_{\Omega} \omega | u|^2 \, d{\bf x}},\qquad 
 ~\omega > 0, \quad \omega \in L^{\infty}(\Omega)
\end{split}
\end{equation}
together with the $\omega$-weighted $L_2$  scalar product:
\begin{equation}
  \begin{split}
 (u,v)_{\omega} := \int_{\Omega} \omega ~u  v d{\bf x}.
\end{split}
\end{equation}

To  write  finite element scheme to solve the model  problem  \eqref{eq1}  in whole 
$\Omega$,
we discretize $\Omega_T = \Omega \times (0,T)$
by  partition $K_h = \{K\}$  of $\Omega$ into elements $K$, where 
$h=h(x)$ is a mesh function
defined as $h =  \max_{K \in K_h} h_K$. Here, where $h_K$
denotes the local diameter of the element $K$.
We also denote by  $\partial K_h = \{\partial K\}$ a partition of
 the boundary $\Gamma$ into boundaries  $\partial K$ of the elements $K$. Let $J_{\tau}$ be a uniform partition of the time interval $(0,T)$ into $N$ 
  equidistance subintervals $J=(t_{k-1},t_k]$ with the time step $\tau=T/N.$
 We also 
 assume a minimal angle condition on elements $K$ in $K_h$ \cite{MA,KN}.

 To formulate 
the 
finite element method
for the spatial semi-discrete problem \eqref{eq1} 
 in $\Omega$
we introduce the finite element space $W_h^E(\Omega)$ for every
component of the electric  field $E$ defined by
\begin{equation}
W_h^E(\Omega) := \{ w \in H^1(\Omega): w|_{K} \in  P_1(K),   \forall K \in K_h \}, \nonumber
\end{equation}
where $P_1(K)$ denote the set of piecewise-linear functions on $K$.
We define ${f_0}_h, {f_1}_h, j_h$ to be the 
 usual interpolants of 
 $f_0, f_1, j$, respectively,
 in \eqref{eq1}  onto  $[W_h^E(\Omega)]^3$.

  Setting ${\textbf W^E(\Omega)} =  [W^E(\Omega)]^3$ and
${\textbf W_h^E(\Omega)} =  [W_h^E(\Omega)]^3$
   where the test function space  is chosen as
\begin{equation}\label{TestSpace}
{\textbf  W}_{h,0}^E(\Omega):=\{{\mathbf v}\in {\textbf  W_h^E}(\Omega)~ |~ {\mathbf v}=0 \quad  \mbox{on }\quad \Gamma\},
\end{equation}
the spatial semi-discrete problem \eqref{eq1} 
 in $\Omega$ 
  reads: 

\emph{Find } $E_{h} \in {\mathbf W_h^E(\Omega)}$ \emph{ such  that } 
$\forall {\mathbf v} \in {\mathbf  W}_{h,0}^E(\Omega)$, 
\begin{equation}\label{eq6}
  \begin{split}
    B(E_h, \mathbf{v}) := \left ( \varepsilon \partial_{tt} E_{h}, {\textbf v} \right ) +
    \left ( \sigma \partial_{t} E_{h}, {\textbf v} \right )
    + a( E_{h},  {\textbf v} )  &= -(j, {\textbf v}  ),  \\
\qquad   E_h(\cdot,0) &= {f_0}_h(\cdot), \\
\partial_t E_h(\cdot,0) &= {f_1}_h(\cdot) \quad 
   \mbox{ in } \Omega.  
  \end{split}
\end{equation}

Here, $a$ is a bilinear form  defined as

\begin{equation}\label{ah1}
  \begin{split}
  a( E_{h},  {\textbf v} ) &:= (\nabla E_h,\nabla {\textbf v})+ 
    (\nabla \cdot ((\varepsilon -1) E_h), \nabla \cdot {\textbf v} ) \\
&-( n \cdot \nabla \cdot ((\varepsilon -1) E_h), \nabla \cdot {\textbf v})_{\partial \Omega}  - \langle\partial_n E_{h}, {\textbf v}\rangle_{\Gamma}
 \end{split}
\end{equation}

We observe that boundary terms in \eqref{ah1}  disappear because of definition of test space \eqref{TestSpace}.
Thus, the bilinear form  \eqref{ah1} for the case of test space  \eqref{TestSpace}  will be transformed to

\begin{equation}\label{ah2}
  \begin{split}
  a( E_{h},  {\textbf v} ):= (\nabla E_h,\nabla {\textbf v})+ 
    (\nabla \cdot ((\varepsilon -1) E_h), \nabla \cdot {\textbf v} ).
 \end{split}
\end{equation}

Let us recall the  explicit 
  fully discrete finite element scheme for solution of \eqref{eq6}
 for $k=1,2,\ldots,N-1$
 and  $\forall {\mathbf v} \in {\mathbf  W}_{h,0}^E(\Omega)$
  which was derived in \cite{BL2}:

\begin{equation}\label{forwfem3}
  \begin{array}{l}
   \left ( {\varepsilon}_h
   \frac{E_{h}^{k+1} - 2 E_h^k + E_h^{k-1}}{\tau^2},    {\mathbf v}  \right )
   + (\nabla E_h^k,\nabla  {\mathbf v} )+
     (\nabla \cdot ({\varepsilon}_h E_h^k), \nabla \cdot   {\mathbf v}  ) 
    - (\nabla \cdot E_h^k, \nabla \cdot   {\mathbf v} ) \\
    +  (\sigma_h \frac{E_{h}^{k+1} - E_h^{k-1}}{ 2\tau},  {\mathbf v} )
    + (j_h^k,  {\mathbf v} )
    = 0,  \\
   {E_h}^0 = {f_0}_h \mbox{ and } {E_h}^1 =  {E_h}^0 + \tau {f_1}_h \mbox{ in } \Omega.  
  \end{array}
\end{equation}
In the scheme 
 \eqref{forwfem3}   we approximated $E_h(k\tau)$ and  $j_h(k\tau)$
 by $E_h^k$  and  $j_h^k$, respectively,  for $k=1,2,...,N$.
Rearranging terms in \eqref{forwfem3} we get for $k=1,2,\ldots,N-1$
and   $\forall {\mathbf v} \in {\mathbf  W}_{h,0}^E(\Omega)$
\begin{equation}\label{forwfem5}
  \begin{split}
 &   \left ( (\varepsilon_h  + \frac{\tau}{2}  \sigma_h ) E_h^{k+1},  {\mathbf v} \right)
    =   \left(2 \varepsilon_h  E_h^k,   {\mathbf v} \right) - \left( \varepsilon_h  E_h^{k-1},  {\mathbf v} \right)
    - \tau^2 ( \nabla E_h^k, \nabla    {\mathbf v})\\
 & - \tau^2   ( \nabla \cdot ({\varepsilon}_h E_h^k), \nabla \cdot   {\mathbf v}) 
+   \tau^2  ( \nabla \cdot E_h^k, \nabla \cdot  {\mathbf v} ) \\
&
+ \tau (\frac{\sigma_h}{2}  E_h^{k-1},  {\mathbf v} )   -  \tau^2 (j_h^k, v), \\
& E_h^0 = {f_0}_h \mbox{ and } E_h^1 =
          E_h^0 + \tau {f_1}_h \mbox{ in } \Omega.
  \end{split}
\end{equation}
  
  For the convergence of this scheme the following CFL condition derived in \cite{BR1} for the case of $\sigma=0$   should hold:
  \begin{equation}\label{CFL}
\tau \leq \frac{h}{\eta}, \eta = C \sqrt{ 1 + 3 \| \varepsilon -1\|_\infty},
  \end{equation}
  where $C$ is a mesh independent constant.
  The CFL condition for
  the case when both functions $\varepsilon \neq 0, \sigma \neq 0$ is topic of ongoing research.

\section{Energy norm error estimate (stability estimate)}

\label{sec:energynorm}

 In this section  first we  give a proof of energy estimate,
 for the vector $E\in H^{2}\left( \Omega_{T}\right) $ of
 the continuous model problem \eqref{eq1}. Then we formulate
 stability estimate for semi-discrete problem which is consequence of the energy
 estimate for the continuous problem.


 \begin{theorem}
 
 Assume that condition
  (\ref{2.3}) on the functions $\varepsilon(x), \sigma(x)$ hold. Let $\Omega \subset
\mathbb{R}^{3}$ be a bounded domain with the piecewise smooth
  boundary $\partial \Omega$. For any $ t^*\in \left( 0,T\right)
$ let $\Omega_{t^*}= \Omega\times \left( 0, t^* \right)$ and $\Gamma_{t^*} = \Gamma \times (0, t^*).$ Suppose that
  there exists a solution $E\in H^{2}\left( \Omega_{T}\right) $ of
  the model problem \eqref{eq1}.
  Then the vector $E$ is unique and there exists a constant
  $C = C(\|\varepsilon\|_{C^2(\Omega)}, \| \sigma\|, t^*)$
   such  that
   the following energy estimate is true  for all $\varepsilon \geq 1$ in
     \eqref{eq1}:
 \end{theorem}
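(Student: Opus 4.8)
The plan is to apply the energy method, testing the first equation of \eqref{eq1} against $\partial_t E$ and integrating over $\Omega_{t^*}$. Multiplying by $\partial_t E$ and integrating over $\Omega$, the inertial term becomes $(\varepsilon\,\partial_{tt}E,\partial_t E)=\tfrac12\frac{d}{dt}\|\partial_t E\|_\varepsilon^2$, and integrating the Laplacian by parts (its boundary term vanishing because $\partial_t E=0$ on $\Gamma$) turns $-(\Delta E,\partial_t E)$ into $\tfrac12\frac{d}{dt}\|\nabla E\|^2$. The conductivity term contributes the sign-definite dissipation $(\sigma\,\partial_t E,\partial_t E)=\|\partial_t E\|_\sigma^2\ge0$, which may be discarded when estimating from above, while the source term $-(j,\partial_t E)$ is deferred to the final Cauchy--Schwarz/Young step.

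The delicate contribution is the stabilization term $-(\nabla(\nabla\cdot((\varepsilon-1)E)),\partial_t E)$. Integrating by parts in space (the boundary term again vanishing) rewrites it as $(\nabla\cdot((\varepsilon-1)E),\nabla\cdot\partial_t E)$, and using $\nabla\cdot((\varepsilon-1)E)=(\varepsilon-1)\nabla\cdot E+\nabla\varepsilon\cdot E$ splits it into a genuine energy part $((\varepsilon-1)\nabla\cdot E,\partial_t(\nabla\cdot E))=\tfrac12\frac{d}{dt}\|\nabla\cdot E\|_{\varepsilon-1}^2$ --- here the hypothesis $\varepsilon\ge1$ is precisely what makes the weight $\varepsilon-1\ge0$, so this is a nonnegative energy contribution --- plus a residual cross term $R:=(\nabla\varepsilon\cdot E,\nabla\cdot\partial_t E)$ that does not close into a perfect time derivative.

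Collecting these yields the differential identity
\[
\tfrac12\frac{d}{dt}\mathcal E(t)+\|\partial_t E\|_\sigma^2+R=-(j,\partial_t E),\qquad \mathcal E(t):=\|\partial_t E\|_\varepsilon^2+\|\nabla E\|^2+\|\nabla\cdot E\|_{\varepsilon-1}^2.
\]
The main obstacle is controlling $R$. My plan is to integrate by parts once more in space, moving the divergence off $\partial_t E$ to obtain $R=-(\nabla(\nabla\varepsilon\cdot E),\partial_t E)$, where $\nabla(\nabla\varepsilon\cdot E)$ involves only second derivatives of $\varepsilon$ and first derivatives of $E$. This is exactly where $\varepsilon\in C^2(\Omega)$ and the announced minor restriction on $\nabla\varepsilon$ enter: bounding the second derivatives of $\varepsilon$ in $L^\infty$ and using the Poincar\'e inequality $\|E\|\le C_P\|\nabla E\|$ (valid since $E=0$ on $\Gamma$), I can estimate $|R|\le C(\|\varepsilon\|_{C^2(\Omega)})\,\mathcal E(t)$.

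Finally, integrating the identity over $(0,t^*)$, discarding the nonnegative dissipation, and absorbing the source via $\int_0^{t^*}|(j,\partial_t E)|\,dt\le\tfrac12\|j\|^2_{L^2(\Omega_{t^*})}+\tfrac12\int_0^{t^*}\mathcal E(t)\,dt$, I arrive at $\mathcal E(t^*)\le\mathcal E(0)+\|j\|_{L^2(\Omega_{t^*})}^2+C\int_0^{t^*}\mathcal E(t)\,dt$. Gr\"onwall's inequality then produces $\mathcal E(t^*)\le C(\|\varepsilon\|_{C^2(\Omega)},\|\sigma\|,t^*)\,(\mathcal E(0)+\|j\|_{L^2(\Omega_{t^*})}^2)$, with $\mathcal E(0)$ expressed through the initial data $f_0,f_1$ --- the claimed energy estimate. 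Uniqueness is then immediate: the difference of two solutions solves the homogeneous problem with vanishing data and source, so $\mathcal E\equiv0$, whence Poincar\'e forces the two solutions to coincide.
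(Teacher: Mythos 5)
Your route coincides with the paper's own proof in all its essential steps: test with $\partial_t E$, integrate by parts so that the inertial, Laplacian and stabilization terms produce time derivatives of $\|\partial_t E\|_\varepsilon^2$, $\|\nabla E\|^2$ and $\|\nabla\cdot E\|_{\varepsilon-1}^2$, isolate the non-closing cross term $R=(\nabla\varepsilon\cdot E,\nabla\cdot\partial_t E)$, integrate it by parts a second time so that only $\nabla\varepsilon$, $D^2\varepsilon$, $E$ and $\nabla E$ appear, bound it by $C(\|\varepsilon\|_{C^2(\Omega)})\,\mathcal E(t)$, and close with Gr\"onwall. However, there is one genuine gap: the energy norm in the statement, $\vert\vert\vert E\vert\vert\vert^2(t^*)$ in \eqref{energyestimate}, contains the term $\|E\|_\sigma^2(t^*)$, and your argument never controls it. By discarding the dissipation $(\sigma\partial_t E,\partial_t E)\ge 0$, your functional $\mathcal E(t)$ consists of only the other three terms, so what you prove is strictly weaker than the claimed estimate. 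The paper keeps exactly this term and converts it into the missing piece: writing $E(x,t^*)=f_0(x)+\int_0^{t^*}\partial_t E\,dt$ and using $(a+b)^2\le 2a^2+2b^2$ together with Cauchy--Schwarz in time, one obtains $\|E\|_\sigma^2(t^*)\le 2\|f_0\|_\sigma^2+C(t^*)\int_0^{t^*}\|\partial_t E\|_\sigma^2\,dt$, i.e. the dissipation integral dominates $\|E\|_\sigma^2(t^*)$ up to data; this is precisely how $\|E\|_\sigma^2(t^*)$ reaches the left-hand side and how $\|f_0\|_\sigma^2$ appears on the right of \eqref{energyestimate}. Your argument is easily patched the same way (a posteriori, since $\sigma\le d_2$ and $\varepsilon\ge1$ give $\|\partial_t E\|_\sigma^2\le d_2\|\partial_t E\|_\varepsilon^2\le d_2\,\mathcal E$, which Gr\"onwall already bounds), but as written the theorem is not fully established.

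Apart from this, your treatment is sound, and in one respect cleaner than the paper's: to close the Gr\"onwall step you dominate $\|E\|$ by $C_P\|\nabla E\|$ via Poincar\'e (legitimate since $E=0$ on $\Gamma_T$), whereas the paper absorbs $\|E\|^2_{\Omega_{t^*}}$ into $\|E\|^2_{\sigma,\Omega_{t^*}}$, a step that implicitly needs $\sigma\ge1$ and sits uneasily with assumption \eqref{2.3}, which allows $\sigma=0$ outside $\Omega_{\rm IN}$. Two small points of rigor you should still record: the bound $|R|\le C\,\mathcal E(t)$ requires Young's inequality after Cauchy--Schwarz (a product of norms is not itself dominated by $\mathcal E$ without it), and the vanishing of the boundary terms in the two integrations by parts of the stabilization term should be justified explicitly --- either by $\partial_t E=0$ on $\Gamma_{t^*}$ or, as the paper notes, by $\varepsilon\equiv1$ (hence $\nabla\varepsilon\equiv0$) in a neighbourhood of $\Gamma$ by \eqref{2.3}.
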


\begin{equation}\label{energyestimate}
\begin{split}
\vert\vert\vert E \vert\vert \vert^2 (t^*) := & \left\Vert \partial _{t}E \right\Vert_{\varepsilon}^{2}(t^*) + \left\Vert E \right\Vert_{\sigma}^{2}(t^*) + \left\Vert \nabla E \right\Vert^{2}(t^*) + \left\Vert \nabla \cdot E \right\Vert _{\varepsilon - 1}^{2}(t^*) \\
&\leq C \left[ \left\Vert j\right\Vert _{\Omega _{t^*}}^{2}+\left\Vert f_{1} \right\Vert _{\varepsilon}^{2} + \left\Vert \nabla f_{0}\right\Vert ^{2} + \left\Vert f_{0}\right\Vert _{\sigma}^{2} + \left\Vert \nabla \cdot f_{0}\right\Vert_{\varepsilon - 1}^{2}\right] .
\end{split}
\end{equation}

\begin{proof}
We mark the terms in the first line of \eqref{eq1} with 
\begin{equation}\label{eq1terms}
    \underbrace{\varepsilon \partial_{tt} E}_{\textbf{I}_1}  \underbrace{-\Delta E}_{\textbf{I}_2} + \underbrace{\nabla ( \nabla \cdot (( 1 - \varepsilon ) E )}_{\textbf{I}_3} =  \underbrace{-\sigma \partial_t E}_{\textbf{I}_4} - \underbrace{j}_{\textbf{I}_5}
\end{equation}
to simplify notation in our proof, such that $\mathbf{I}_1 + \mathbf{I}_2 + \mathbf{I}_3 = \mathbf{I}_4 + \mathbf{I}_5$. 

Through the proof we denote a generic constant of moderate size by $C := C(\|\varepsilon-1\|_{\rm C^2(\Omega)}, \|\sigma\|_\infty, t^*)$. To prove our energy estimate we multiply \eqref{eq1terms} by $2\partial_t E$ and integrate over $\Omega_{t^*}$, and study it term by term. For the first term in \eqref{eq1terms} we have
\begin{align}\label{eq1term1}
    ((\mathbf{I}_1, 2 \partial_t E))_{\Omega_{t^*}} &= (( 2\partial_{tt} E, \partial_t E))_{\varepsilon, {\Omega _{t^*}}} \nonumber\\
    &= \int_0^{t^*} \partial_t \| \p_t E \|_\varepsilon^2(t) \, dt \nonumber\\
    &= \| \p_t E \|_\varepsilon^2(t^*) - \| f_1 \|_\varepsilon^2
\end{align}
where we used the chain rule, fundamental theorem of calculus and boundary conditions of \eqref{eq1}. 

Next we have that
\begin{align}\label{eq1term2}
    ((\mathbf{I}_2, 2 \partial_t E))_{\Omega_{t^*}} &= -((\Delta E,2 \partial_t E))_{\Omega_{t^*}} \nonumber\\
    &= -2 \langle\langle \partial_n E, \partial_t E\rangle\rangle_{\Gamma_{t^*}} + ((2\nabla E, \nabla (\partial_t E)))_{\Omega_{t^*}} \nonumber\\
    &= \int^{t^*}_0 \partial_t \| \nabla E \|^2(t) \, dt \nonumber\\
    &= \| \nabla E \|^2(t^*) - \| \nabla f_0 \|^2
\end{align}
where we use spatial integration by part, boundary conditions and that $\| \partial_n E\|_{\Gamma_{t^*}} = 0$.  

For the third term in \eqref{eq1terms} we integrate by parts spatially twice and get
\begin{align}\label{eq1term3}
    (( \mathbf{I}_3, 2 \partial_t E ))_{\Omega_{t^*}} &= (( \nabla ( \nabla \cdot (( 1 - \varepsilon ) E) , 2 \partial_t E ))_{\Omega_{t^*}} \nonumber\\
    &= 2 \langle \langle n \cdot ( \nabla \cdot (( 1 - \varepsilon ) E), \p_t E \rangle \rangle_{\Gamma_{t^*}} - 2 (( \nabla \cdot (( 1 - \varepsilon ) E), \nabla \cdot \p_t E))_{\Omega_{t^*}} \nonumber \\
    &= -2 (( \nabla (1 - \varepsilon) \cdot E, \nabla \cdot \p_t E ))_{\Omega_{t^*}} + ((2 \nabla \cdot E, \nabla \cdot \p_t E ))_{\varepsilon - 1, \Omega_{t^*}} \nonumber \\
    &= -2 \langle\langle \nabla(1 - \varepsilon) \cdot E, n \cdot \p_t E \rangle\rangle_{\Gamma_{t^*}} + 2 (( \nabla(\nabla(1 - \varepsilon) \cdot E), \p_t E))_{\Omega_{t^*}} \nonumber \\
    &+ \int_0^{t^*} \p_t \| \nabla \cdot E \|_{\varepsilon - 1}^2 (t) \, dt \nonumber\\
    &= 2 (( \nabla(\nabla(1 - \varepsilon) \cdot E), \p_t E))_{\Omega_{t^*}} + \| \nabla \cdot E \|_{\varepsilon - 1}^2 ({t^*}) - \| \nabla \cdot f_0 \|_{\varepsilon - 1}^2
\end{align}
Above we made use of \eqref{2.3} (note that since $\varepsilon \equiv 1$ on a neighbourhood of $\Gamma$, we have $(1 - \varepsilon)|_\Gamma \equiv 0$ and $\nabla(1 - \varepsilon)|_\Gamma \equiv 0$). We also made use of the fact that $\nabla \cdot ((1 - \varepsilon) E) = \nabla (1 - \varepsilon)E + (1 - \varepsilon)\nabla \cdot E$.

Before estimating the fourth term in \eqref{eq1terms} we first note that 
\begin{equation*}
    E(x,t^*) = E(x,0) + \int_0^{t^*} \partial_t E(x,t) \, dt,
\end{equation*}
and using that $(a+b)^2 \leq 2a^2 + 2b^2$ we get
\begin{align*} 
    &E^2(x,t^*) \leq 2 E^2(x,0) + 2(\int_0^{t^*} \partial_t E (x,t) \, dt)^2 \leq 2 E^2(x,0) + 2\int_0^{t^*} (\partial_t E)^2(x,t) \, dt.
\end{align*}
If we then integrate these terms over $\Omega$ we arrive at
\begin{align*}
    - 2 (( \p_t E, \p_t E ))_{\Omega_{t^*}}  \leq - \|E\|^2(t^*) + 2 \|E\|^2(0) = \|E\|^2(t^*) + 2 \|f_0\|^2.
\end{align*}
Applying this to our case we have that
\begin{align}\label{eq1term4}
    ((\mathbf{I}_4, 2 \p_t E ))_{\Omega_{t^*}} &= - 2 (( \p_t E, \p_t E ))_{\sigma, \Omega_{t^*}} \nonumber\\
    &\leq - \| E \|^2_\sigma (t^*) + 2 \|f_0\|^2_\sigma 
\end{align}

For our final term in \eqref{eq1terms}, we simply use that $2ab \leq a^2 + b^2$:
\begin{align}\label{eq1term5}
    ((\mathbf{I}_5, 2 \p_t E ))_{\Omega_{t^*}} &\leq ((|j|, 2|\p_t E| ))_{\Omega_{t^*}} \leq \| j \|^2_{\Omega_{t^*}} + \| \p_t E \|^2_{\Omega_{t^*}}
\end{align}

Next we collect all the terms \eqref{eq1term1}-\eqref{eq1term5} to arrive at
\begin{align}\label{collectedterms}
    &\|\partial_t E \|_\varepsilon^2(t^*) + \| \nabla E \|^2(t^*) + \| \nabla \cdot E \|^2_{\varepsilon - 1}(t^*) + \| E \|_\sigma^2 (t^*), \nonumber \\
    &\leq \| f_1 \|_\varepsilon^2 + \|\nabla f_0\|^2 + \| \nabla \cdot f_0\|_{\varepsilon - 1}^2 + 2 \| f_0 \|_\sigma^2 + \|j \|_{\Omega_{t^*}}^2 + \|\partial_t E\|_{\Omega_{t^*}}^2 \nonumber \\
    &+ 2 (( \nabla(\nabla(\varepsilon-1) \cdot E ), \partial_t E ))_{\Omega_{t^*}}. 
\end{align}
We may estimate the last term of \eqref{collectedterms} using that 
\begin{equation*}
    |\nabla(\nabla(\varepsilon - 1) \cdot E)| \leq C (|E| + |\nabla E|). 
\end{equation*}
Thus, the above estimate together with the inequality $ab \leq \frac{a^2}{2} + \frac{b^2}{2}$ yields
\begin{align*}
    2 (( |\nabla(\nabla(\varepsilon-1) \cdot E )| , |\partial_t E| ))_{\Omega_{t^*}} &\leq 2 C (( |E| + |\nabla E| , |\partial_t E| ))_{\Omega_{t^*}} \\
    &\leq C (\|(| E| + |\nabla E|)\|_{\Omega_{t^*}}^2 + \| \partial_t E\|_{\Omega_{t^*}}^2) \\ 
    &\leq C (\|E\|_{\Omega_{t^*}}^2 + \|\nabla E\|_{\Omega_{t^*}}^2 + \|\partial_t  E\|_{\Omega_{t^*}}^2). \\
\end{align*}
Now we can rewrite \eqref{collectedterms} as
\begin{align*}
    F(t^*) &\leq g(t^*) + C (\|\p_t E\|_{\Omega_{t^*}}^2 + \|\nabla E\|_{\Omega_{t^*}}^2 + \| E\|_{\Omega_{t^*}}^2) \\
    &\leq g(t^*) + C (\|\p_t E\|_{\varepsilon, \Omega_{t^*}}^2 + \|\nabla E\|_{\Omega_{t^*}}^2 +\|\nabla \cdot E \|^2_{\varepsilon - 1, \Omega_{t^*}} + \|E\|_{\sigma, \Omega_{t^*}}^2) \\
    & = g(t^*) + C\int_0^{t^*} F(t) \, dt,
\end{align*}
for some constant $C > 0$, where 
\begin{align*}
    &F(t) := \|\partial_t E \|_\varepsilon^2(t) + \| \nabla  E \|^2(t) + \| \nabla \cdot E \|^2_{\varepsilon - 1}(t) + \| E \|_\sigma^2(t), \\
  &g(t) := \| f_1 \|_\varepsilon^2 + \|\nabla f_0\|^2 + \| \nabla \cdot f_0\|_{\varepsilon - 1}^2 + 2 \| f_0 \|_\sigma^2 +
   \int_0^{t^*} \|j \|^2 \, dt.
\end{align*}
One application of Grönwall's inequality now gives us the result in \eqref{energyestimate}.
\end{proof}


The next corollary follows from the stability estimate for the continuous
problem where all components of the electric field $E$ are replaced with their approximations $E_h$, as well as all other continuous functions are replaced with their discrete analogs.

\begin{corollary}
 Assume that condition
 (\ref{2.3}) on the functions $\varepsilon(x), \sigma(x)$ hold.
 For any $ {t^*}\in \left( 0,T\right)
$ let $\Omega_{{t^*}}= \Omega\times \left( 0, t^* \right)$ and $\Gamma_{t^*} = \Gamma \times (0, t^*).$ Suppose that
  there exists a solution $E_h \in  \textbf{W}_h^E(\Omega)$ of
  the  problem \eqref{eq6} and the approximations of the initial data
  $f_{0,h}$ and $f_{1,h}$  satisfy the regularity conditions
  $f_{1,h}, f_{0,h} \in \textbf{W}_h^E(\Omega)$.
  Then $E_h$ is unique and there exists a constant
  $C = C(\|\varepsilon\|_{C^2(\Omega)}, \| \sigma\|, t^*)$
   such  that
   the following energy estimate is true  for all $\varepsilon \geq 1, \sigma > 0, \varepsilon > \sigma$ in
     \eqref{eq6}:
\end{corollary}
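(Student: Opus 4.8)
The plan is to mirror the proof of the preceding theorem, replacing the operation ``multiply by $2\partial_t E$ and integrate over $\Omega_{t^*}$'' by the Galerkin operation of choosing the test function $\mathbf{v} = 2\partial_t E_h$ in \eqref{eq6} and integrating the resulting scalar identity over $(0,t^*)$. First I would check that this test function is admissible: since $E_h(\cdot,t)\in\mathbf{W}_h^E(\Omega)$ solves the linear system of ordinary differential equations encoded in \eqref{eq6}, its nodal coefficients are smooth in $t$, so $\partial_t E_h$ exists, is again piecewise linear, and vanishes on $\Gamma$; hence $\partial_t E_h\in\mathbf{W}_{h,0}^E(\Omega)$. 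Uniqueness will then be a consequence of the estimate and linearity, since the difference of two solutions with identical data solves the homogeneous problem and therefore has vanishing energy.

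Next I would process the identity term by term, along the lines of \eqref{eq1term1}--\eqref{eq1term5}. The inertial term yields $\partial_t\|\partial_t E_h\|_\varepsilon^2$ and the first part of the bilinear form \eqref{ah2} yields $\partial_t\|\nabla E_h\|^2$, both by the product rule in time and with no spatial integration by parts needed (a mild simplification over the continuous case, where $-\Delta E$ had to be integrated by parts). Writing $\nabla\cdot((\varepsilon-1)E_h) = \nabla(\varepsilon-1)\cdot E_h + (\varepsilon-1)\nabla\cdot E_h$ splits the second part of \eqref{ah2} into the clean contribution $\partial_t\|\nabla\cdot E_h\|_{\varepsilon-1}^2$ plus a cross term $2((\nabla(\varepsilon-1)\cdot E_h,\nabla\cdot\partial_t E_h))_{\Omega_{t^*}}$. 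The conductivity term now sits on the left and contributes the dissipative quantity $2\int_0^{t^*}\|\partial_t E_h\|_\sigma^2\,dt\ge 0$; to recover the energy contribution $\|E_h\|_\sigma^2(t^*)$ I would reuse the integral representation from the theorem, namely $\|E_h\|_\sigma^2(t^*)\le 2\|f_{0,h}\|_\sigma^2 + 2\int_0^{t^*}\|\partial_t E_h\|_\sigma^2\,dt$. Bounding the source by Young's inequality and collecting everything then produces the discrete counterpart of \eqref{collectedterms}.

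The step that deserves the most care, and that at first sight looks like the genuine obstacle of the discrete setting, is the cross term. As in \eqref{eq1term3} I would integrate it by parts in space to move the divergence off $\partial_t E_h$, the boundary contribution vanishing because $\nabla(\varepsilon-1)\equiv 0$ in a neighbourhood of $\Gamma$ by \eqref{2.3}. The point to verify is that this integration by parts is still legitimate even though $E_h$ is only piecewise linear: the resulting integrand $\nabla(\nabla(\varepsilon-1)\cdot E_h)$ places at most one derivative on $E_h$, so it requires only $\nabla E_h\in L^2(\Omega)$, which holds for $H^1$-conforming $P_1$ elements, while the second derivative falls on the $C^2$ coefficient $\varepsilon$. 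Consequently $|\nabla(\nabla(\varepsilon-1)\cdot E_h)|\le C(|E_h|+|\nabla E_h|)$ almost everywhere with $C=C(\|\varepsilon-1\|_{C^2(\Omega)})$, precisely the bound used in the continuous proof; no $H^2$-regularity of $E_h$ is ever invoked, so the apparent obstacle dissolves.

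Finally, the cross term is controlled by $C(\|E_h\|^2+\|\nabla E_h\|^2+\|\partial_t E_h\|^2)_{\Omega_{t^*}}$; using $\varepsilon\ge 1$ to dominate the unweighted norms by the weighted energy contributions, and once more the integral representation of $E_h$ to absorb $\|E_h\|_{\Omega_{t^*}}^2$, the estimate reduces to $F(t^*)\le g(t^*)+C\int_0^{t^*}F(t)\,dt$ with $F$ and $g$ the discrete analogues of those introduced in the theorem. One application of Grönwall's inequality then delivers the asserted energy estimate, the positivity hypotheses $\sigma>0$ and $\varepsilon>\sigma$ guaranteeing that the weighted quantities in $\vert\vert\vert\cdot\vert\vert\vert$ are well defined and that the relevant discrete operators are positive. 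I expect the only real work beyond transcription to be (a) the admissibility and time-regularity of $\partial_t E_h$ and (b) the justification of the discrete integration by parts just described.
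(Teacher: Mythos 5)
Your proposal is correct and takes essentially the same approach as the paper: the paper gives no separate proof of the corollary, asserting only that it follows from the continuous energy estimate by replacing $E$ with $E_h$ and the data with their discrete analogues, which is exactly the term-by-term transcription (test function $\mathbf{v}=2\partial_t E_h$, same splitting of the bilinear form, same treatment of the cross term, conductivity term, source term, and Gr\"onwall step) that you carry out. The extra verifications you supply --- admissibility and time-regularity of $\partial_t E_h$ as a test function, and the validity of the spatial integration by parts for $P_1$ functions since only one derivative ever lands on $E_h$ while the second falls on the $C^2$ coefficient $\varepsilon$ --- are precisely the details the paper leaves implicit, and you handle them correctly.
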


\begin{equation}\label{energyestimatediscrete}
\begin{split}
  \vert\vert\vert E_h \vert\vert \vert^2 (t^*) :=
  & \left\Vert \partial _{t} E_h \right\Vert_{\varepsilon}^{2}(t^*)
  + \left\Vert E_h \right\Vert_{\sigma}^{2}(t^*)
  + \left\Vert \nabla E_h \right\Vert^{2}(t^*)
  + \left\Vert \nabla \cdot E_h \right\Vert _{\varepsilon - 1}^{2}(t^*) \\
  &\leq C \left[ \left\Vert j\right\Vert _{\Omega _{t^*}}^{2}
    +\left\Vert f_{1,h} \right\Vert _{\varepsilon}^{2}
    + \left\Vert \nabla f_{0,h}\right\Vert ^{2}
    + \left\Vert f_{0,h}\right\Vert _{\sigma}^{2}
    + \left\Vert \nabla \cdot f_{0,h}\right\Vert_{\varepsilon - 1}^{2}\right] .
\end{split}
\end{equation}

\section{A priori error estimates}\label{sec:apriori}

  In this section we present an {\em a priori error } estimate for
  the error $e = E(\cdot,t)  - E_h(\cdot,t)$  between the solution $E$
  of the
  model problem \eqref{eq1} and solution $E_h$ of the
  semi-discretized problem  \eqref{eq6}.

  Let 
\begin{equation}\label{error}
    e := E(\cdot,t)  - E_h(\cdot,t) = E - \Pi_h E + \Pi_h E - E_h = \eta + \xi,
\end{equation} 
where $\eta := E - \Pi_h E$, $\xi := \Pi_h E - E_h$. Here, $\Pi_h E\, : \, {\textbf H_E^1(\Omega_T)} \longrightarrow {\mathbf W_h^E(\Omega_T)}$
is an elliptic projection operator  for $E \in H(div, \Omega)$,  see details in \cite{MA,CDE}, such that $\forall {\textbf v}\in{\textbf  W_h^E(\Omega)}$
\begin{equation}\label{ellipticoperator}
    a(\Pi_h E, \textbf{v}) = a(E,\textbf{v}).
\end{equation}

The first part of error, $\eta = E -\Pi_h E$, can be estimated  as follows.

\begin{theorem} \label{ellipticprojection}

   Let  $E \in  {\textbf H_E^2(\Omega_T)}$ bet the solution of the continuous problem \eqref{eq1}.
  Then   
\begin{equation}\label{interpol}
  \begin{split}
    ||\eta ||_{L_2} &\leq C_I(\tau^2 ||D_t^2 E || + h^2 || D_x^2  E ||), \\
     ||\eta ||_{H^1} &\leq C_I(\tau ||D_t^2 E || + h || D_x^2  E ||). 
    \end{split}
\end{equation}
 
  For 
 semi-discretized problem  \eqref{eq6} these estimates
reduces to:
 \begin{equation}\label{interpolest}
   \begin{split}
  ||\eta  ||_{L_2} &\leq C_I h^2 || D_x^2  E ||,\\
  ||\eta ||_{H^1} &\leq C_I h || D_x^2  E ||.
  \end{split}
 \end{equation}

\end{theorem}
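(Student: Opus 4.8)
The plan is to exploit the defining orthogonality \eqref{ellipticoperator} of the elliptic projection and to reduce the projection error $\eta = E - \Pi_h E$ to a classical interpolation error, for which the $P_1$ estimates are standard. The whole argument rests on two structural properties of the spatial bilinear form $a$ in \eqref{ah2}: boundedness, $\abs{a(u,v)} \le M \norm{u}_{H^1}\norm{v}_{H^1}$, which follows from Cauchy--Schwarz together with $\varepsilon_r \in [1,d_1]$ from \eqref{2.3}; and coercivity on the test space, $a(v,v) \ge \alpha \norm{v}_{H^1}^2$. The latter is the nontrivial ingredient, since the stabilization term $(\nabla\cdot((\varepsilon-1)v),\nabla\cdot v)$ is not sign-definite and has to be absorbed into $\norm{\nabla v}^2$ using the restriction on $\nabla\varepsilon$ announced in the introduction.

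For the $H^1$ bound I would invoke C\'ea's lemma. From \eqref{ellipticoperator} one gets the Galerkin orthogonality $a(\eta, v_h) = 0$ for all $v_h \in \mathbf{W}_{h,0}^E(\Omega)$. Decomposing $\eta = (E - v_h) + (v_h - \Pi_h E)$, where the second summand lies in the discrete space, coercivity and boundedness give $\alpha\norm{\eta}_{H^1}^2 \le a(\eta,\eta) = a(\eta, E - v_h) \le M\norm{\eta}_{H^1}\norm{E - v_h}_{H^1}$, hence $\norm{\eta}_{H^1} \le (M/\alpha)\inf_{v_h}\norm{E - v_h}_{H^1}$. Taking $v_h = I_h E$, the nodal interpolant, and applying the standard estimate $\norm{E - I_h E}_{H^1} \le C_I h\norm{D_x^2 E}$ yields the spatial part; the temporal part $\tau\norm{D_t^2 E}$ comes from a separate interpolation of $E$ onto the time grid, whose linear-in-time error is controlled by $\p_{tt} E$. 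Dropping this time-interpolation step (time stays continuous) reduces \eqref{interpol} to the semi-discrete estimate \eqref{interpolest}.

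For the $L_2$ bound I would run an Aubin--Nitsche duality argument to gain the extra power of $h$. Introduce the adjoint problem $a(v,w) = (\eta, v)$ for all admissible $v$ (note that $a$ is not symmetric, so this is a genuine dual problem), and assume the elliptic regularity $\norm{w}_{H^2} \le C\norm{\eta}_{L_2}$. Then $\norm{\eta}_{L_2}^2 = (\eta,\eta) = a(\eta,w) = a(\eta, w - I_h w) \le M\norm{\eta}_{H^1}\norm{w - I_h w}_{H^1} \le C h\norm{\eta}_{H^1}\norm{w}_{H^2}$, and combining with the regularity bound and the previous $H^1$ estimate gives $\norm{\eta}_{L_2} \le C h\norm{\eta}_{H^1} \le C_I h^2\norm{D_x^2 E}$; the analogous step in time produces $\tau^2\norm{D_t^2 E}$.

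I expect the coercivity of $a$ to be the main obstacle: unlike the plain Laplacian, the divergence stabilization term carries no definite sign, so $a(v,v)\ge\alpha\norm{v}_{H^1}^2$ genuinely requires the assumptions \eqref{2.3} and a smallness condition on $\nabla\varepsilon$, exactly the point flagged in the introduction. A secondary difficulty is justifying the $H^2$ elliptic regularity for the nonstandard, nonsymmetric adjoint operator in the $H(\mathrm{div})$ setting and checking that the boundary contributions in \eqref{ah1} drop out, which they do by \eqref{TestSpace} and because $\varepsilon\equiv 1$, $\nabla\varepsilon\equiv 0$ near $\Gamma$. With coercivity secured, the rest is the routine C\'ea/Aubin--Nitsche machinery together with classical tensor-product interpolation bounds.
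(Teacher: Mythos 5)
Your proposal is correct in outline, but it is not the route the paper takes, and the comparison is instructive. The paper's own proof is far more elementary: citing \eqref{ellipticoperator}, it asserts the orthogonality $( E - \Pi_h E ,\, v - \Pi_h E ) = 0$ in the plain $L^2$ inner product (see \eqref{elop1}), concludes the best-approximation property $\| \eta \| \le \| E - v \|$ for every discrete $v$, takes $v = E_h^I$ to be the nodal interpolant, and reads off both estimates in \eqref{interpol} from standard interpolation bounds. Note, however, that \eqref{ellipticoperator} only yields $a(E - \Pi_h E,\, v - \Pi_h E) = 0$, i.e.\ orthogonality with respect to the (nonsymmetric) form $a$, not with respect to $(\cdot,\cdot)$; so the paper's key step is really C\'ea's argument in disguise, and as written it does not justify the $L^2$-norm gain of the extra power of $h$ --- that gain is precisely what your Aubin--Nitsche step supplies. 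In this sense your proof is the more careful one: coercivity (Theorem \ref{Wellposedness1} combined with Friedrichs' inequality on the zero-trace space \eqref{TestSpace}, under the smallness condition \eqref{auxA}, and note that the coercivity proof never uses discreteness, so it applies to $\eta \in [H^1_0(\Omega)]^3$) together with continuity gives the $H^1$ bound by C\'ea, and duality with the adjoint problem gives the $L^2$ bound. The price you pay is the additional hypothesis of $H^2$ elliptic regularity for the nonsymmetric adjoint operator, which the paper never states and which requires, e.g., convexity or smoothness of $\Omega$; the price the paper pays is an orthogonality claim that its own definition of $\Pi_h$ does not support. Both treatments of the temporal terms $\tau^2\|D_t^2 E\|$ and $\tau\|D_t^2 E\|$ are equally loose: the paper also just appeals to ``standard interpolation error estimates for the fully discrete scheme in space and time'' and drops these terms in the semi-discrete case, exactly as you do.
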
 

\begin{proof}

  We observe that  using \eqref{ellipticoperator} we can get $\forall v \in \mathbf W_h^E(\Omega_T) $
 \begin{equation}\label{elop1}
   \begin{split} 
     \|\eta \|^2 &= \| E - \Pi_h E \|^2 
     = ( E - \Pi_h E ,E - \Pi_h E )\\
     &= ( E - \Pi_h E ,E - v)
     + ( E - \Pi_h E ,v - \Pi_h E ) \\
     &= ( E - \Pi_h E ,E - v) \leq  \| E - \Pi_h E \| \|E - v \|,
   \end{split}
 \end{equation}
 and thus,
 \begin{equation}\label{elop2}
   \begin{split} 
     \|\eta \| = \| E - \Pi_h E \|  \leq  \|E - v \|.
   \end{split}
 \end{equation}
 Taking $v = E_h^I$  in \eqref{elop2} , where $E_h^I$ is nodal interpolant of $E$, 
 and using
 standard interpolation error estimates \cite{CDE,Johnson,Brenner}
  for
the fully discrete scheme in space and time
 we get
 \begin{equation}\label{elop3}
   \begin{split} 
     \| \eta \|_{L_2}   &\leq C_I(\tau^2 ||D_t^2 E || + h^2 || D_x^2  E ||),\\
       ||\eta ||_{H^1} &\leq C_I(\tau ||D_t^2 E || + h || D_x^2  E ||),
   \end{split}
 \end{equation}
 where $C_I$ are interpolation constants.
  For 
  semi-discretized problem  \eqref{eq6}  terms with $D_t^2 E$ disappear
  and these estimates
 reduces to \eqref{interpolest}.
 
  \end{proof}

In the proof of a priori error estimate 
we use the constant $C$ as a moderate constant which is adjusted throughout the proof, as well as well-posedness of the bilinear form $a(\cdot,\cdot)$. Let us briefly sketch the proof of well-posedness of $a(\cdot,\cdot)$.
We refer to \cite{AB1} for the full details of this proof.

Let us define the linear form as
\begin{equation}
{\mathcal L}( {\textbf v}):= -(j, v).
\end{equation}
We now can     rewrite the equation \eqref{eq6}  as 
\begin{equation}\label{semidiscprob1A}
   \left ( \varepsilon \partial_{tt} E_{h}, {\textbf v} \right ) +
    \left ( \sigma \partial_{t} E_{h}, {\textbf v} \right )
    + a( E_{h},  {\textbf v} )  = {\mathcal L}( {\textbf v}).
\end{equation}

\begin{theorem}[well-posedness of $a(\cdot,\cdot)$ ] \label{Wellposedness1}
 
 Assume that the following condition  holds
 \begin{equation}\label{auxA}
\abs{\nabla\varepsilon}\le \frac{1}{2} \min ( 1/2,  \varepsilon-1). 
\end{equation}

 Let
\begin{equation}\label{TNorm}
\vert\vert\vert E_h\vert\vert\vert_{a} ^2:= 
\parallel{E_h}\parallel_{\varepsilon}^2+
\parallel{\nabla E_h}\parallel^2+
\parallel{\nabla \cdot E_h}\parallel_{\varepsilon -1}^2.
\end{equation}
 
 Then for all $ E_h$ and 
${\textbf v} \in {\textbf  W}_h^E(\Omega)$ the discrete bilinear form  $a(\cdot,\cdot)$ is well-posed, or:
  \begin{align}
 a( E_h, E_h)  &  \ge 
 \frac{1}{2} \vert\vert\vert E_h\vert\vert\vert_{a} ^2 
  &&\mbox{(Coercivity of $a$)},  \label{Quasi-coercivity1} \\
 a( E_h, {\textbf v}) &  \le   C_2 \vert\vert\vert E_h\vert\vert\vert_{a} \,
 \vert\vert\vert{\textbf v} \vert\vert\vert_{a}, 
&&\mbox{(Continuity of $a$)}, \label{continuity1}\\
 \abs{{\mathcal L}( {\textbf v})}& \le  C_3
\vert\vert\vert {\textbf v} \vert\vert\vert_{a}, 
&&\mbox{(Continuity of ${\mathcal L} $)}\label{continuity2}. 
  \end{align}
where, $C_i, i=2,3$ are positive constants.
\end{theorem}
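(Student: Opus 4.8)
The plan is to verify the three inequalities separately, reserving most of the effort for the coercivity bound \eqref{Quasi-coercivity1}, since it is the only one in which the smallness hypothesis \eqref{auxA} plays any role; the two continuity estimates are then essentially Cauchy--Schwarz applied to the bounded coefficients of \eqref{ah2}.

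For coercivity I would set $\mathbf{v} = E_h$ in \eqref{ah2} and expand the divergence with the product rule $\nabla\cdot((\varepsilon-1)E_h) = \nabla\varepsilon\cdot E_h + (\varepsilon-1)\,\nabla\cdot E_h$, which gives
\begin{equation*}
a(E_h,E_h) = \|\nabla E_h\|^2 + \|\nabla\cdot E_h\|_{\varepsilon-1}^2 + (\nabla\varepsilon\cdot E_h,\ \nabla\cdot E_h).
\end{equation*}
The first two terms already reproduce two of the three contributions to $\vert\vert\vert E_h\vert\vert\vert_a^2$, so the whole issue is the sign-indefinite cross term. I would bound it pointwise by $|\nabla\varepsilon|\,|E_h|\,|\nabla\cdot E_h|$ and split it with Young's inequality $ab \le \tfrac12 a^2 + \tfrac12 b^2$, then invoke the two halves of \eqref{auxA}: the estimate $|\nabla\varepsilon|\le\tfrac12(\varepsilon-1)$ lets me absorb one piece into a fraction (a quarter) of $\|\nabla\cdot E_h\|_{\varepsilon-1}^2$, while $|\nabla\varepsilon|\le\tfrac14$ leaves only the small remainder $\tfrac18\|E_h\|^2$.

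The delicate point, which I expect to be the main obstacle, is that the form $a$ carries no zeroth-order term, yet the norm $\vert\vert\vert\cdot\vert\vert\vert_a$ contains $\|E_h\|_\varepsilon^2$. I would recover this missing piece from $\|\nabla E_h\|^2$ by a Poincar\'e--Friedrichs inequality $\|E_h\|\le C_P\|\nabla E_h\|$ combined with $\varepsilon\in[1,d_1]$, so that the surplus in $\|\nabla E_h\|^2$ simultaneously dominates the small $\tfrac18\|E_h\|^2$ remainder and supplies $\tfrac12\|E_h\|_\varepsilon^2$. Arranging the constant so that exactly the factor $\tfrac12$ emerges is precisely the constant-tracking balancing performed in \cite{AB1}; the hypothesis \eqref{auxA} is calibrated to make this closure possible, which is why I would cite that reference for the bookkeeping rather than reproduce it.

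The two continuity bounds are routine. For \eqref{continuity1} I would apply Cauchy--Schwarz term by term in \eqref{ah2}: the first term is controlled directly by $\|\nabla E_h\|\,\|\nabla\mathbf{v}\|$, and in the second I expand the product rule as above, estimating $\|\nabla\varepsilon\cdot E_h\|\le\|\nabla\varepsilon\|_\infty\,\|E_h\|_\varepsilon$ (using $\varepsilon\ge1$) and $\|(\varepsilon-1)\,\nabla\cdot E_h\|\le\|\varepsilon-1\|_\infty^{1/2}\,\|\nabla\cdot E_h\|_{\varepsilon-1}$, while bounding $\|\nabla\cdot\mathbf{v}\|\le\sqrt{n}\,\|\nabla\mathbf{v}\|$ so that it is captured by $\vert\vert\vert\mathbf{v}\vert\vert\vert_a$; collecting these yields a constant $C_2$ depending on $\|\varepsilon-1\|_{C^1(\Omega)}$ and $n$. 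For \eqref{continuity2} I would simply write $\abs{\mathcal{L}(\mathbf{v})} = \abs{(j,\mathbf{v})}\le\|j\|\,\|\mathbf{v}\|\le\|j\|\,\|\mathbf{v}\|_\varepsilon\le\|j\|\,\vert\vert\vert\mathbf{v}\vert\vert\vert_a$, so that $C_3=\|j\|$ works. With coercivity and both continuity estimates in hand, uniqueness of the discrete solution follows in the standard Lax--Milgram manner.
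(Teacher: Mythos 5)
Your argument follows the paper's own proof almost line for line in the parts that actually work: taking $\mathbf{v}=E_h$ in \eqref{ah2}, expanding $\nabla\cdot((\varepsilon-1)E_h)=\nabla\varepsilon\cdot E_h+(\varepsilon-1)\nabla\cdot E_h$, splitting the indefinite cross term by Young's inequality and invoking the two halves of \eqref{auxA} (giving exactly the remainders $\tfrac14\norm{\nabla\cdot E_h}^2_{\varepsilon-1}$ and $\tfrac18\norm{E_h}^2$ that you describe), and proving \eqref{continuity1}--\eqref{continuity2} by Cauchy--Schwarz with $C_3=\norm{j}$. Where you depart from the paper is the interesting part: after the Young step the paper simply asserts that
\begin{equation*}
\norm{\nabla E_h}^2+\tfrac34\norm{\nabla\cdot E_h}^2_{\varepsilon-1}-\tfrac18\norm{E_h}^2\ \ge\ \tfrac12\vert\vert\vert E_h\vert\vert\vert_{a}^2,
\end{equation*}
deferring to \cite{AB1}, and never explains how the positive term $\tfrac12\norm{E_h}_\varepsilon^2$, which is absent from $a(E_h,E_h)$, is to be produced. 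You correctly isolate this as the crux and name the only available mechanism, a Poincar\'e--Friedrichs inequality; in that respect your write-up is more careful than the paper's.

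However, your closure of the crux does not work as claimed, and the assertion that \eqref{auxA} is ``calibrated'' to make it work is wrong: \eqref{auxA} constrains only $\nabla\varepsilon$ and is powerless here. By your own accounting, closing with the factor $\tfrac12$ requires $\tfrac12\norm{\nabla E_h}^2\ge\tfrac12\norm{E_h}_\varepsilon^2+\tfrac18\norm{E_h}^2$, hence (using $\varepsilon\le d_1$ and $\norm{E_h}\le C_P\norm{\nabla E_h}$) the condition $C_P^2\,(4d_1+1)\le 4$ on the Poincar\'e constant $C_P$ of $\Omega$. This is an additional geometric hypothesis coupling the size of $\Omega$ to $d_1$, assumed nowhere in the theorem, and it is not mere bookkeeping: on a large domain, a field equal to a constant vector on most of $\Omega$ and cut off near $\Gamma$ (with $\nabla\cdot E_h=0$ on the support of $\nabla\varepsilon$) makes $\vert\vert\vert E_h\vert\vert\vert_{a}^2$ grow like the volume while $a(E_h,E_h)$ grows only like the cutoff layer, so \eqref{Quasi-coercivity1} with the norm \eqref{TNorm} and the fixed constant $\tfrac12$ simply fails. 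To be fair, this hole is inherited from the paper, whose proof is silent on the point and whose citation of \cite{AB1} concerns a time-harmonic bilinear form that is not \eqref{ah2}; but since you made the mechanism explicit, you must also make the missing hypothesis explicit (or restate \eqref{Quasi-coercivity1} for the seminorm without $\norm{E_h}_\varepsilon^2$, which still requires a bound of the form $C_P\le 2$), rather than delegate it to a reference where it is not proved for this form.
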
 

\begin{proof}

  We use  a Lax-Milgram  approach, or we will show that
   the discrete bilinear form
$a( \cdot, \cdot)$ is  coercive, and both $a( \cdot, \cdot)$ and 
   ${\mathcal L}( {\cdot})$ are continuous.

To prove coercivity of $a(\cdot,\cdot)$ we choose
 ${\mathbf{v}}= E_h$ in
\eqref{ah2}  to get 
\begin{equation}\label{bilinearform1A}
\begin{split}
a( E_h,  E_h) &=   (\nabla  E_h, \nabla  E_h)  
+(\nabla (\varepsilon-1)  E_h, \nabla \cdot E_h)
+((\varepsilon-1) \nabla \cdot E_h, \nabla \cdot  E_h) \\                        &=  \parallel{\nabla  E_h}\parallel^2 +
\parallel{\nabla \cdot E_h}\parallel_{\varepsilon -1}^2 
+(\nabla (\varepsilon-1)  E_h, \nabla \cdot E_h).          
\end{split}
\end{equation}
In  \eqref{bilinearform1A} we have used  the equality
\begin{equation}
\begin{split}
  (\nabla \cdot ( (\varepsilon -1 ) E_h), \nabla \cdot  {\textbf v}) =
  ( \nabla (\varepsilon -1)  E_h + (\varepsilon  -1 ) \nabla \cdot E_h, \nabla \cdot  {\textbf v}). 
  \end{split}
  \end{equation} 
Since $\nabla (\varepsilon-1)  =\nabla \varepsilon$ we can use the following estimate  derived in \cite{AB1}:
\begin{equation}\label{CoerciveA1}
\begin{split}
\pm \Big((\nabla \varepsilon) E_h, \nabla \cdot E_h\Big) &\ge 
-\frac 12 \norm{E_h}^2_{\abs{\nabla\varepsilon}}- 
\frac 12 \norm{\nabla \cdot E_h}^2_{\abs{\nabla\varepsilon}},
\end{split}
\end{equation}

Using \eqref{CoerciveA1}  and assumption \eqref{auxA}  we obtain coercivity of
 $a$:
\begin{equation}
\begin{split}
  a( E_h,  E_h) &= \parallel{\nabla E_h}\parallel^2+
\parallel{\nabla \cdot E_h}\parallel_{\varepsilon -1}^2 
+(\nabla (\varepsilon-1)  E_h, \nabla \cdot E_h)\\
&\geq
 \parallel{\nabla E_h}\parallel^2+
\parallel{\nabla \cdot E_h}\parallel_{\varepsilon -1}^2 
-\frac 12 \norm{E_h}^2_{\abs{\nabla\varepsilon}}- 
\frac 12 \norm{\nabla \cdot E_h}^2_{\abs{\nabla\varepsilon}} \geq 
\frac{1}{2} \vert\vert\vert E_h\vert\vert\vert_{a} ^2.
\end{split}
\end{equation}

To prove continuity  of $a(\cdot, \cdot)$, we 
use {\sl Cauchy-Schwarz' inequality} and   estimate \eqref{CoerciveA1}
to obtain   $\forall {\textbf v}\in{\textbf  W_h^E(\Omega)}$  :
\begin{equation}\label{Coercivity2}  
\begin{split}
a( E_h, {\textbf v}) &=  
(\nabla  E_h, \nabla {\textbf v}) 
 +(\nabla\cdot ( (\varepsilon-1 ) E_h), \nabla \cdot {\textbf v}) \\
&= (\nabla  E_h, \nabla {\textbf v}) 
+( (\varepsilon-1 )\nabla \cdot E_h, \nabla \cdot {\textbf v}) 
+((\nabla\varepsilon) E_h, \nabla\cdot  {\textbf v}) \\
 &\le 
\parallel{\nabla E_h}\parallel \parallel{\nabla \mathbf{v}}\parallel 
+ \parallel{\nabla \cdot E_h}\parallel _{\varepsilon-1 }
 \parallel \nabla\cdot{\mathbf v}\parallel _{\varepsilon-1 }\\
 &+\parallel{E_h}\parallel_{\abs{\nabla \varepsilon}}
\parallel{\nabla\cdot{\mathbf v}}\parallel _{\abs{\nabla \varepsilon}}
\le
C\vert\vert\vert E_h\vert\vert\vert_{a}
\cdot\vert\vert\vert {\textbf v} \vert\vert\vert_{a}.
\end{split}
\end{equation}

Finally, we can verify continuity of ${\mathcal L}( {\textbf v })$:
\begin{equation}\label{coercivity3}
\begin{split}
|{\mathcal L}( {\textbf v })| &= 
|(-j, {\textbf v })| \le  
\parallel{j}\parallel
\parallel{\textbf v}\parallel  \le
 \parallel{j} \parallel
\vert\vert\vert {\textbf v } \vert\vert\vert.
\end{split}
\end{equation}

\end{proof}

\begin{theorem}
  Let  $E \in  {\textbf H_E^2(\Omega_T)}$ solves the continuous problem \eqref{eq1}, and $E_{h} \in {\mathbf W_h^E(\Omega_T)}$ solves the semi-discretized problem \eqref{eq6}.  Assume that $E(t),\partial_{t} E(t),\partial_{tt} E(t) \in H^2(\Omega)$.
  Further assume that the assumptions \eqref{2.3} and \eqref{auxA} on  functions $\varepsilon$ and $\sigma$ hold, as well as $f_0, {f_1} \in [H^1(\Omega)]^3$, ${f_0}_h, {f_1}_h, j_h \in [W_h^E(\Omega)]^3$ and $j \in [L^2(\Omega_T)]^3$. Then  there exists a constant $C(\varepsilon, \sigma)$ such that   for all $t \in [0,T]$ the following a priori error estimates  hold: 

\begin{equation}\label{apriorierror}
\begin{split}
  \norm{e(t)} &=  \vert\vert E(\cdot,t) - E_h(\cdot,t) \vert\vert
   \le C_I h^2 (\|  D^2_x E \|_\Omega  + 2d_1 t \int_0^T (\| \partial_{ss} D^2_x E \|_\Omega  +
  \| \partial_{s} D^2_x E \|_\Omega ) \, ds), \\
    \norm{e(t)}_{H^1} &=  \vert\vert E(\cdot,t) - E_h(\cdot,t) \vert\vert_{H^1}
   \le C_I h ( \|  D^2_x E \|_\Omega  + 2 d_1 t \int_0^T (\| \partial_{ss} D^2_x E \|_\Omega  +
   \| \partial_{s} D^2_x E \|_\Omega ) \, ds).
\end{split}
\end{equation}

\end{theorem}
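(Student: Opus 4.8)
The plan is to use the elliptic-projection splitting already set up in \eqref{error}, namely $e=\eta+\xi$ with $\eta=E-\Pi_h E$ and $\xi=\Pi_h E-E_h$, and to estimate the two pieces by entirely different mechanisms. The projection part $\eta$ is a purely approximation-theoretic object and is bounded directly by Theorem \ref{ellipticprojection}, giving $\|\eta\|_{L_2}\le C_I h^2\|D_x^2E\|$ and $\|\eta\|_{H^1}\le C_I h\|D_x^2E\|$ for the semi-discrete problem (estimates \eqref{interpolest}); this already supplies the first summand $C_I h^2\|D_x^2E\|$ (resp. $C_I h\|D_x^2E\|$) in \eqref{apriorierror}. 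It remains to control $\xi$, which lives in the discrete space and must be handled by the stability machinery.

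To obtain an equation for $\xi$, I would subtract the semi-discrete weak form \eqref{eq6} satisfied by $E_h$ from the weak form of \eqref{eq1} satisfied by $E$, tested against $\mathbf v\in\mathbf W_{h,0}^E(\Omega)$. Since the source $-(j,\mathbf v)$ cancels, one is left with $(\varepsilon\,\partial_{tt}e,\mathbf v)+(\sigma\,\partial_t e,\mathbf v)+a(e,\mathbf v)=0$; writing $e=\eta+\xi$ and invoking the defining Galerkin orthogonality \eqref{ellipticoperator} of the elliptic projection, i.e. $a(\eta,\mathbf v)=0$, removes the $a$-contribution of $\eta$ and yields the \emph{error equation} $(\varepsilon\,\partial_{tt}\xi,\mathbf v)+(\sigma\,\partial_t\xi,\mathbf v)+a(\xi,\mathbf v)=-(\varepsilon\,\partial_{tt}\eta,\mathbf v)-(\sigma\,\partial_t\eta,\mathbf v)$. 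This is precisely the semi-discrete problem \eqref{eq6} for the unknown $\xi$ with manufactured right-hand side $-(\varepsilon\,\partial_{tt}\eta+\sigma\,\partial_t\eta)$ and with initial data $\xi(\cdot,0)=\Pi_h f_0-{f_0}_h$, $\partial_t\xi(\cdot,0)=\Pi_h f_1-{f_1}_h$, which I would take to vanish by choosing the discrete initial data as the elliptic projections of $f_0,f_1$, so these terms are absent from \eqref{apriorierror}. I would then reproduce the energy argument of \eqref{energyestimatediscrete} on the error equation: test with $2\partial_t\xi$, integrate over $\Omega$, and use coercivity \eqref{Quasi-coercivity1} of $a$ (valid under \eqref{auxA}) to control $\|\nabla\xi\|$ and $\|\nabla\cdot\xi\|_{\varepsilon-1}$. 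Letting $S(t)$ be the square root of $\|\partial_t\xi\|_\varepsilon^2+\|\nabla\xi\|^2+\|\nabla\cdot\xi\|_{\varepsilon-1}^2$, the identity becomes $\tfrac{d}{dt}S^2\le 2\|\varepsilon\,\partial_{tt}\eta+\sigma\,\partial_t\eta\|\,\|\partial_t\xi\|$ up to lower-order $\nabla\varepsilon$ terms; since $\|\partial_t\xi\|\le S$, dividing by $S$ gives $S'\le\|\varepsilon\,\partial_{tt}\eta+\sigma\,\partial_t\eta\|$ up to a Grönwall factor, so integrating with $S(0)=0$ and then integrating $\partial_t\xi$ once more in time yields the $L_2$ bound $\|\xi(t)\|\le t\int_0^T\|\varepsilon\,\partial_{ss}\eta+\sigma\,\partial_s\eta\|\,ds$.

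The source is then estimated by the same projection bounds applied to the time derivatives of $E$: since $\Pi_h$ is time-independent it commutes with $\partial_t$, so $\partial_t^k\eta=\partial_t^kE-\Pi_h\partial_t^kE$, and Theorem \ref{ellipticprojection} gives $\|\partial_{ss}\eta\|\le C_I h^2\|\partial_{ss}D_x^2E\|$ and $\|\partial_s\eta\|\le C_I h^2\|\partial_s D_x^2E\|$. Bounding $\varepsilon\le d_1$ and $\sigma\le d_1$ via \eqref{2.3}, the source is $\le 2 d_1 C_I h^2(\|\partial_{ss}D_x^2E\|+\|\partial_s D_x^2E\|)$, and substituting into the $\|\xi(t)\|$ bound produces exactly the second summand $C_I h^2\,2d_1 t\int_0^T(\cdots)\,ds$ of \eqref{apriorierror}. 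The triangle inequality $\|e\|\le\|\eta\|+\|\xi\|$ then closes the $L_2$ estimate, and the $H^1$ estimate follows identically, with $\|\eta\|_{H^1}\le C_I h\|D_x^2E\|$ furnishing the dominant term and the (higher-order) $H^1$ contribution of $\xi$ bounded by $h$.

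The main obstacle I anticipate is the $\xi$-step, and within it two points. First, the bilinear form of the full time-dependent operator is not coercive, so the energy estimate cannot be invoked as a black box; one must reproduce its proof for the error equation, carefully tracking the extra $\nabla\varepsilon$ terms coming from $a(\cdot,\cdot)$ and absorbing them through Grönwall's inequality under assumption \eqref{auxA}. Second, one must justify that $\Pi_h$ commutes with $\partial_t$ and that $\partial_{tt}E,\partial_tE\in H^2(\Omega)$, so that the projection estimates legitimately apply to the time-differentiated fields, and that the discrete initial data can be chosen so that $\xi(0)=\partial_t\xi(0)=0$ (otherwise an additional initial-error term has to be carried through). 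The $\eta$-step and the concluding triangle inequality are routine by comparison.
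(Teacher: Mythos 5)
Your proposal is correct and shares the paper's skeleton---the elliptic-projection splitting \eqref{error}, Theorem \ref{ellipticprojection} for $\eta$ (applied also to $\partial_t E,\partial_{tt}E$ after commuting $\Pi_h$ with $\partial_t$), the error equation $(\varepsilon\partial_{tt}\xi,\mathbf{v})+(\sigma\partial_t\xi,\mathbf{v})+a(\xi,\mathbf{v})=-(\varepsilon\partial_{tt}\eta,\mathbf{v})-(\sigma\partial_t\eta,\mathbf{v})$ tested with $\mathbf{v}=\partial_t\xi$, the bounds $\sigma\le\varepsilon\le d_1$, vanishing initial errors, and a concluding triangle inequality---but it diverges at the one step that decides how hard the proof is: the term $a(\xi,\partial_t\xi)$. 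The paper kills this term outright, claiming $a(\xi,\mathbf{v})=a(E-E_h,\mathbf{v})=0$ ``by Galerkin orthogonality,'' after which only the elementary inequality $(\varepsilon\partial_{tt}\xi,\partial_t\xi)\ge\tfrac12\partial_t\Vert\partial_t\xi\Vert^2$ is needed; coercivity \eqref{Quasi-coercivity1} and hypothesis \eqref{auxA} enter the paper's argument only through the well-posedness of $\Pi_h$, and no Gr\"onwall step occurs in the $\xi$-estimate at all. You instead keep $a(\xi,\partial_t\xi)$ and run a full energy argument with coercivity plus Gr\"onwall. Your route is longer but more defensible: for this evolution problem the genuine Galerkin relation \emph{is} the error equation itself, which contains the time-derivative terms, so $a(E-E_h,\mathbf{v})$ has no reason to vanish; the paper's shortcut conflates the elliptic-projection orthogonality $a(\eta,\mathbf{v})=0$ with orthogonality for the full time-dependent operator. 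Your route also controls $\Vert\nabla\xi\Vert$ at order $h^2$ directly, making the $H^1$ bound for $\xi$ immediate, whereas the paper's step \eqref{5.59} silently trades $h^2$ for $h$ (in effect an inverse estimate) without justification. Two cautions on executing your plan. First, the ``lower-order $\nabla\varepsilon$ terms'' are not lower order as written: expanding $a(\xi,\partial_t\xi)$ leaves $((\nabla\varepsilon)\cdot\xi,\nabla\cdot\partial_t\xi)$, whose second factor your energy $S$ does not control; you must integrate this term by parts in time first, absorb the endpoint term into $\Vert\nabla\cdot\xi\Vert^2_{\varepsilon-1}$ using \eqref{auxA}, and only then apply Gr\"onwall. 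Second, your choice ${f_0}_h=\Pi_h f_0$, ${f_1}_h=\Pi_h f_1$ is a genuine repair rather than a convenience: the paper takes interpolants for the discrete initial data yet still asserts $\xi(\cdot,0)=\partial_t\xi(\cdot,0)=0$, which strictly requires your choice (or carrying an additional initial-error term into \eqref{apriorierror}).
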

\begin{proof}

Since
    $\eta$ is estimated  via standard interpolation error estimates \eqref{interpol}, we begin to estimate $\xi$.
 We  first note that $\forall {\textbf v}\in{\textbf  W_h^E(\Omega)}$
\begin{align}
     a(\Pi_h E, \textbf{v}) &= a(E,\textbf{v}) \nonumber \\
     &= -(j, \textbf{v}) - (\varepsilon \partial_{tt} E, \textbf{v}) - (\sigma \partial_t E, \textbf{v}) \\
     &+ (\varepsilon \partial_{tt} \Pi_h E, \textbf{v}) + (\sigma \partial_t \Pi_h E, \textbf{v}) \nonumber 
      - (\varepsilon \partial_{tt} \Pi_h E, \textbf{v}) - (\sigma \partial_t \Pi_h E, \textbf{v}). 
\end{align}
Using \eqref{error} the equation above can be rewritten as 
\begin{align}
    (\varepsilon \partial_{tt} \Pi_h E, \textbf{v}) + (\sigma \partial_t \Pi_h E, \textbf{v}) + a(\Pi_h E, \textbf{v}) = -(j,\textbf{v}) - (\varepsilon \partial_{tt} \eta, \textbf{v}) - (\sigma \partial_t \eta, \textbf{v}). 
\end{align}
By subtracting the first equation of \eqref{eq6} from the expression above, while letting $\textbf{v} := \partial_t \xi$ we arrive at
\begin{equation}\label{5.44}
    (\varepsilon \partial_{tt} \xi, \partial_t \xi) + (\sigma \partial_{t} \xi, \partial_t \xi) + a(\xi, \partial_t \xi) =  - (\varepsilon \partial_{tt} \eta, \partial_t \xi) - (\sigma \partial_t \eta, \partial_t \xi).
\end{equation}
Note that $\forall {\textbf v}\in{\textbf  W_h^E(\Omega)}$
\begin{equation*}
    a(\xi, \textbf{v}) = a( \Pi_h E - E_h, \textbf{v}) = a(E - E_h, \textbf{v}) = 0,
\end{equation*}
by the properties of $\Pi_h$ and Galerkin orthogonality.
Using this   we observe that  in \eqref{5.44} the term $ a(\xi, \partial_t \xi)=0$.
Thus, we can estimate a lower bound of the left-hand side of \eqref{5.44} as
\begin{equation}
    (\varepsilon \partial_{tt} \xi, \partial_t \xi) + (\sigma \partial_{t} \xi, \partial_t \xi) + a(\xi, \partial_t \xi) \geq (\varepsilon \partial_{tt} \xi, \partial_t \xi) = \frac{1}{2} \partial_t \| \varepsilon \partial_t \xi \|^2_\Omega \geq \frac{1}{2} \partial_t \| \partial_t \xi \|^2_\Omega,
\end{equation}
where we have used that $(\sigma \partial_{t} \xi, \partial_t \xi) \geq 0$ and $\varepsilon \geq 1, \sigma \geq 0$.

We can also estimate an upper bound for the right-hand side of \eqref{5.44}:
\begin{equation}
    - (\varepsilon \partial_{tt} \eta, \partial_t \xi) - (\sigma \partial_t \eta, \partial_t \xi) \leq (\|\varepsilon \partial_{tt} \eta \|_\Omega + \|\sigma \partial_t \eta \|_\Omega)\|\partial_t \xi\|_\Omega.
\end{equation}
Collecting these two estimates, we have 
\begin{equation}
    \partial_t \| \partial_t \xi \|^2_\Omega \leq 2(\|\varepsilon \partial_{tt} \eta \|_\Omega + \|\sigma \partial_t \eta \|_\Omega)\|\partial_t \xi\|_\Omega. 
\end{equation}
Integrating over $[0, {t^*}]$ where ${t^*} \in [0, T]$ and using conditions
\eqref{2.3}   for functions $\varepsilon, \sigma$ noting that
$\varepsilon > \sigma$
we get
\begin{equation}\label{5.49}
  \begin{split}
     \| \partial_t \xi \|^2_\Omega ({t^*}) &\leq 2\int_0^{t^*} (\|\varepsilon \partial_{ss} \eta \|_\Omega + \|\sigma \partial_s \eta \|_\Omega)\|\partial_s \xi\|_\Omega (s) \, ds  \\
     &\leq 2 d_1 \int_0^{t^*} (\| \partial_{ss} \eta \|_\Omega + \| \partial_s \eta \|_\Omega)(s) \, ds \, \cdot \max_{t' \in [0,T]} \|\partial_s \xi\|_\Omega,  
      \end{split}
\end{equation}
where we have used that $\xi(\cdot, 0)=  \partial_t \xi(\cdot,0) =0 $, and $\sigma \leq \varepsilon \leq d_1$.

Since \eqref{5.49} holds for all ${t^*} \in [0,T]$, we have
\begin{align}\label{5.50}
    \max_{t' \in [0,T]} \| \partial_t \xi \|^2_\Omega &\leq
    2 d_1 \int_0^T (\| \partial_{ss} \eta \|_\Omega + \| \partial_s \eta \|_\Omega)(s) \, ds \, \cdot \max_{t' \in [0,T]} \|\partial_t \xi\|_\Omega, \nonumber \\
    \max_{t' \in [0,T]} \| \partial_t \xi \|_\Omega &\leq
    2 d_1 \int_0^T (\| \partial_{ss} \eta \|_\Omega + \| \partial_s \eta \|_\Omega)(s) \, ds.
\end{align}
Substituting this  estimate into \eqref{5.49}, we finally obtain
\begin{equation}\label{5.51}
    \| \partial_t \xi \|_\Omega \leq
    2 d_1 \int_0^T (\| \partial_{ss} \eta \|_\Omega + \| \partial_s \eta \|_\Omega)(s) \, ds.
\end{equation}

To proceed further we use  estimate \eqref{interpol} to get
\begin{equation}\label{5.53}
   \int_0^{t^*} (\| \partial_{ss} \eta \|_\Omega + \| \partial_s \eta \|_\Omega)(s) \, ds 
  \leq C_I h^2  \int_0^T (\| \partial_{ss} D^2_x E \|_\Omega  +
   \| \partial_{s} D^2_x E \|_\Omega)  \, ds.
\end{equation}
Using \eqref{5.53} in \eqref{5.51} we obtain
\begin{align}\label{5.54}
  \| \partial_t \xi \|_\Omega
     &\leq 2 d_1 C_I h^2  \int_0^T (\| \partial_{ss} D^2_x E \|_\Omega  +
   \| \partial_{s} D^2_x E \|_\Omega ) \, ds.
\end{align}

Further, we observe that
\begin{equation}\label{5.56}
\begin{split}
  \frac d{dt}\norm{\xi}_\Omega^2 =
  2\norm{\xi}_\Omega\frac d{dt}\norm{\xi}_\Omega &= \frac d{dt}\int_\Omega\abs{\xi}^2\, dx \\
&=2\int_\Omega \xi\cdot\xi_t\, dx\le 2\norm{\xi}_\Omega\norm{  \partial_t \xi}_\Omega
\end{split}
\end{equation}
from which it follows that
\begin{equation}\label{5.57}
\begin{split}
 \frac d{dt}\norm{\xi}_\Omega &\le \norm{\partial_t \xi}_\Omega.
\end{split}
\end{equation}

Integrating in time \eqref{5.57} and using \eqref{5.54} yields 
\begin{equation}\label{5.58}
\norm{\xi(t)}_\Omega\le \int_0^t\norm{  \partial_s \xi}_\Omega\, ds
\le  2 d_1 C_I h^2t  \int_0^T (\| \partial_{ss} D^2_x E \|_\Omega  +
   \| \partial_{s} D^2_x E \|_\Omega ) \, ds
\end{equation}

From \eqref{5.58}  and \eqref{interpol}  follows also that
\begin{equation}\label{5.59}
  \norm{\xi(t)}_{H^1}  
\le 2 d_1 C_I h\,t \int_0^T (\| \partial_{ss} D^2_x E \|_\Omega  +
   \| \partial_{s} D^2_x E \|_\Omega ) \, ds.
\end{equation}

Summing up  \eqref{interpol},
\eqref{5.58} and \eqref{5.59}
we get the desired  error estimates \eqref{apriorierror},
and the proof is complete.

\end{proof}

\section{Numerical examples}

\label{sec:numex}

\begin{figure}[h!]
\begin{center}
\begin{tabular}{cc}
   {\includegraphics[scale=0.3,  trim = 0.0cm 0.0cm  0.0cm 0.0cm, clip=]{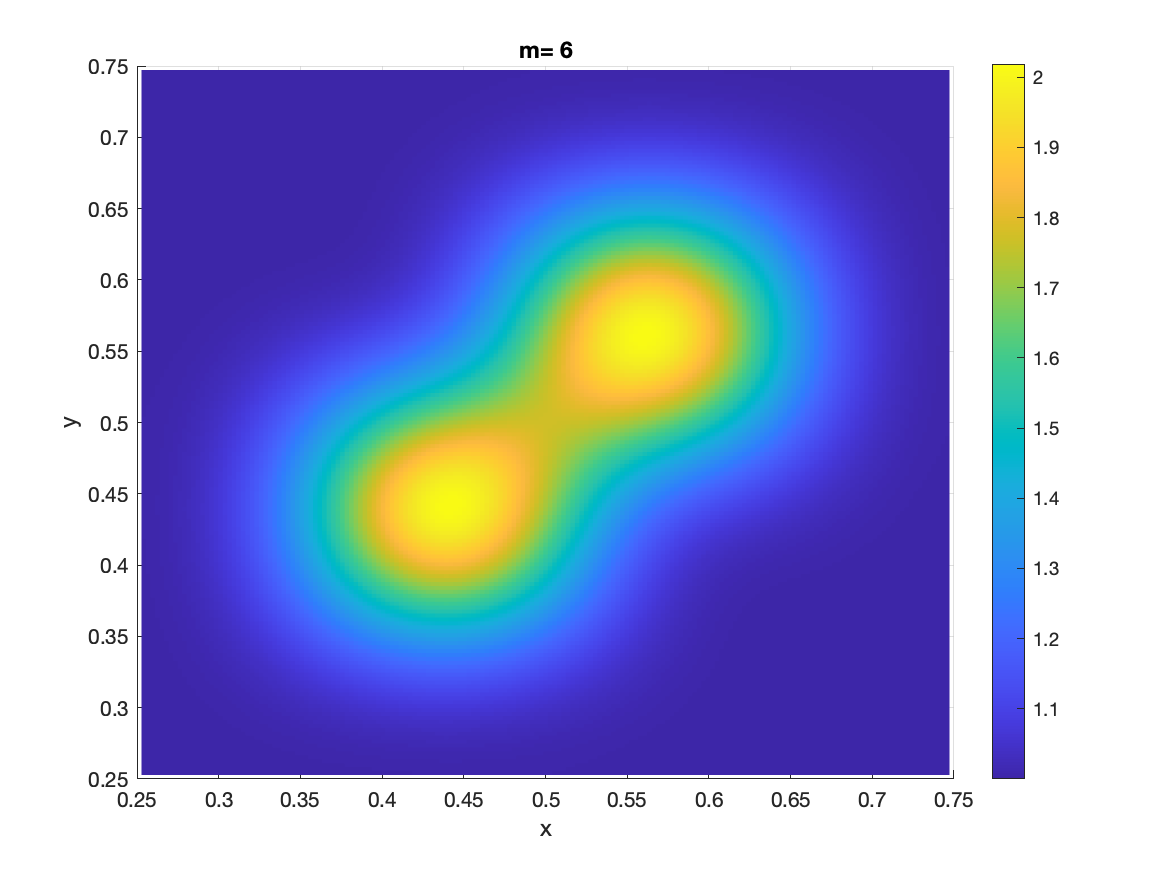}} &
   {\includegraphics[scale=0.3, trim = 0.0cm 0.0cm  0.0cm 0.0cm,  clip=]{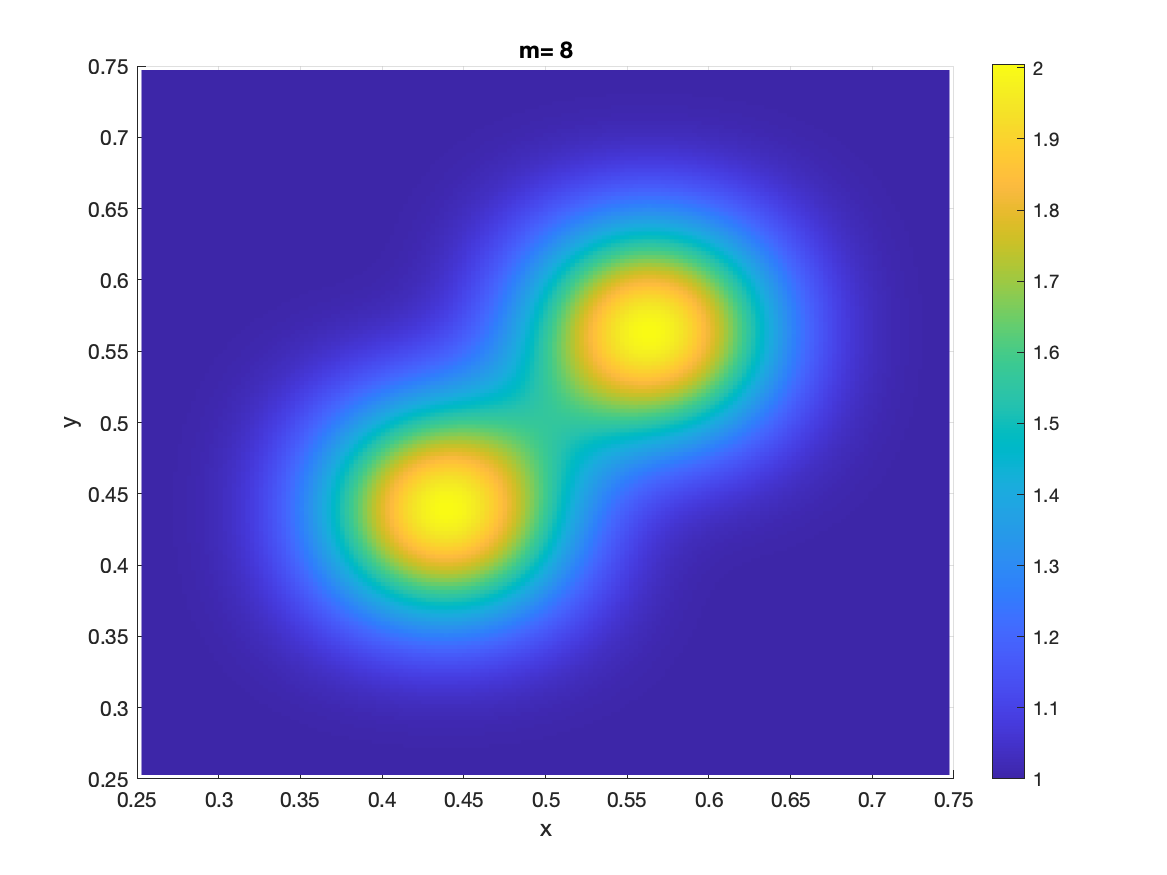}} \\
   a)  $m=6$   & b) $ m=8$  \\
    {\includegraphics[scale=0.3,  trim = 0.0cm 0.0cm 0.0cm 0.0cm, clip=]{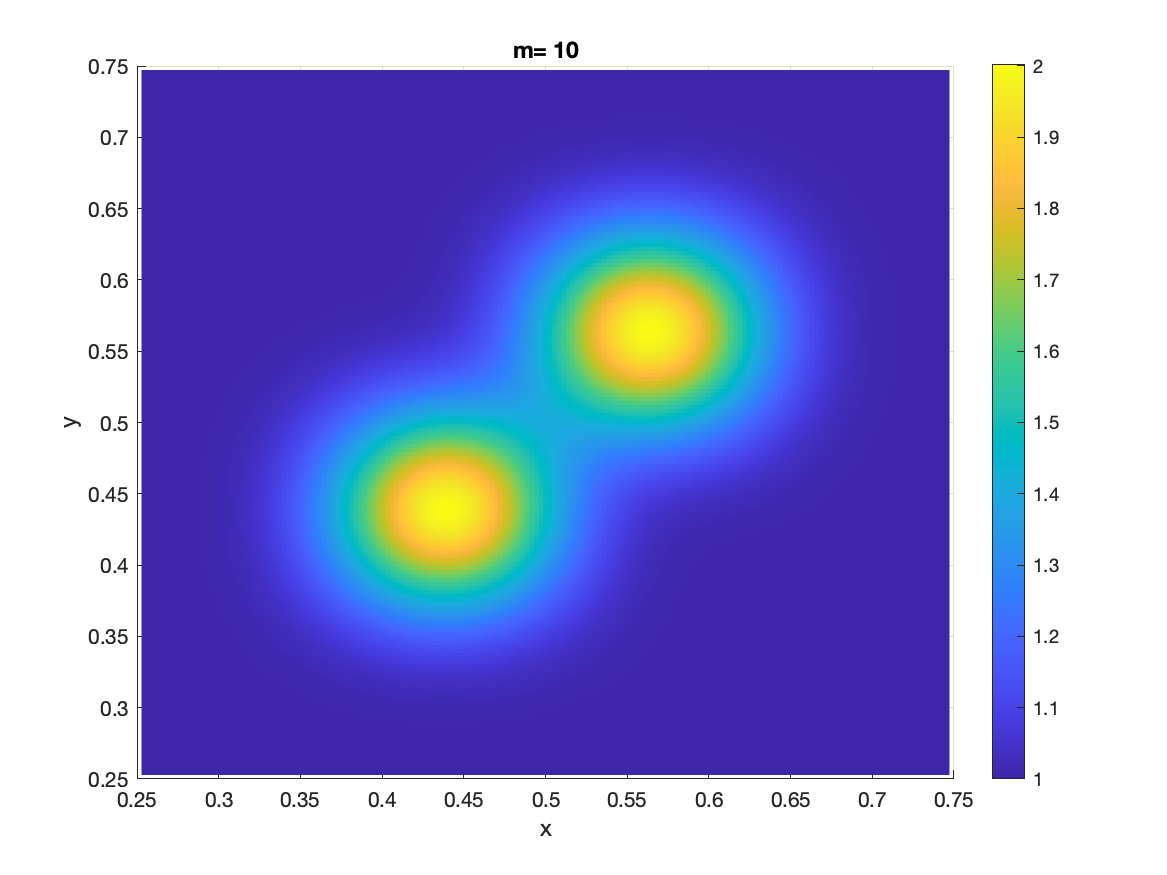}} &
   {\includegraphics[scale=0.3, trim = 0.0cm 0.0cm 0.0cm 0.0cm,  clip=]{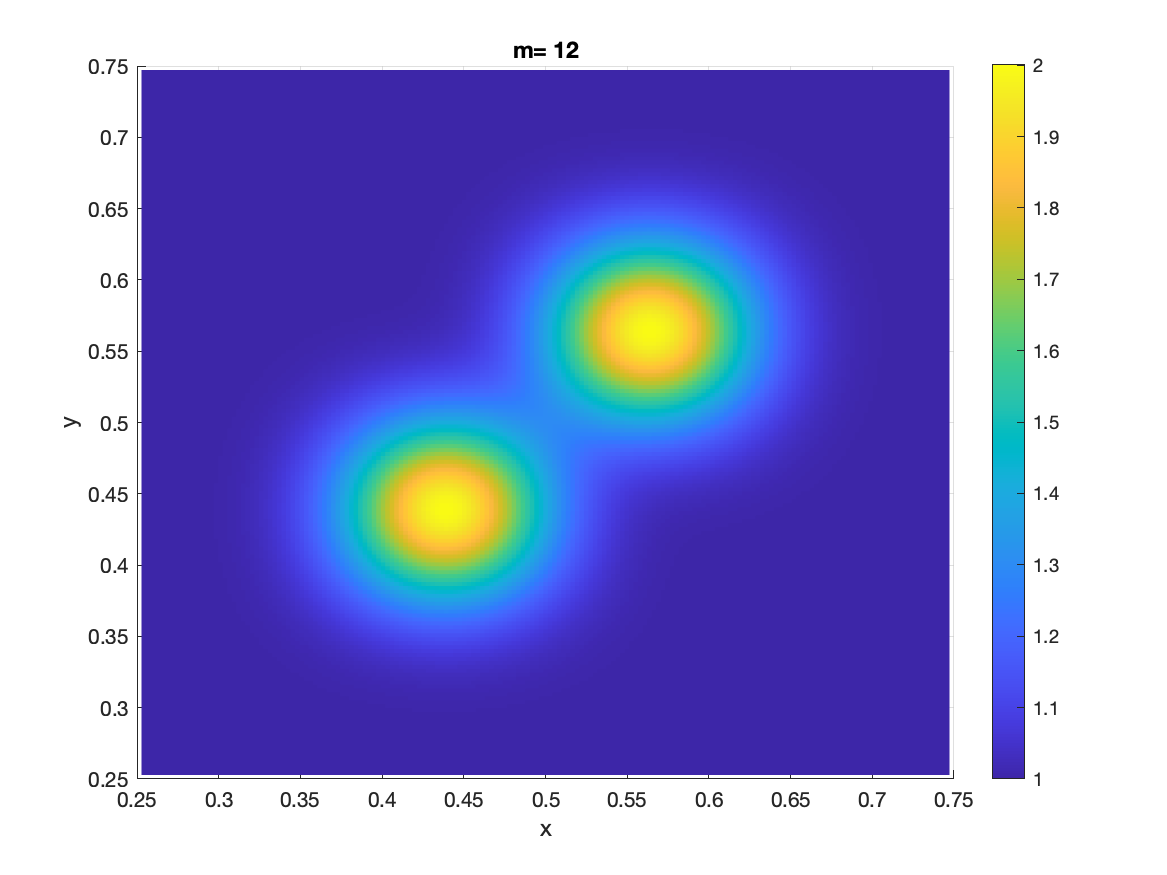}} \\
   c)  $m=10$  & d) $m=12$ 
\end{tabular}
\end{center}
\caption{a) The function $\varepsilon(x,y)$ in the domain $\Omega_1 =[0.25,0.75] \times[0.25,0.75]$  for different
    values of $m$ in \eqref{eps}}.
   \label{fig:F1}
\end{figure}

\begin{figure}[h!]
\begin{center}
\begin{tabular}{cc}
   {\includegraphics[scale=0.3,  trim = 0.0cm 0.0cm  0.0cm 0.0cm, clip=]{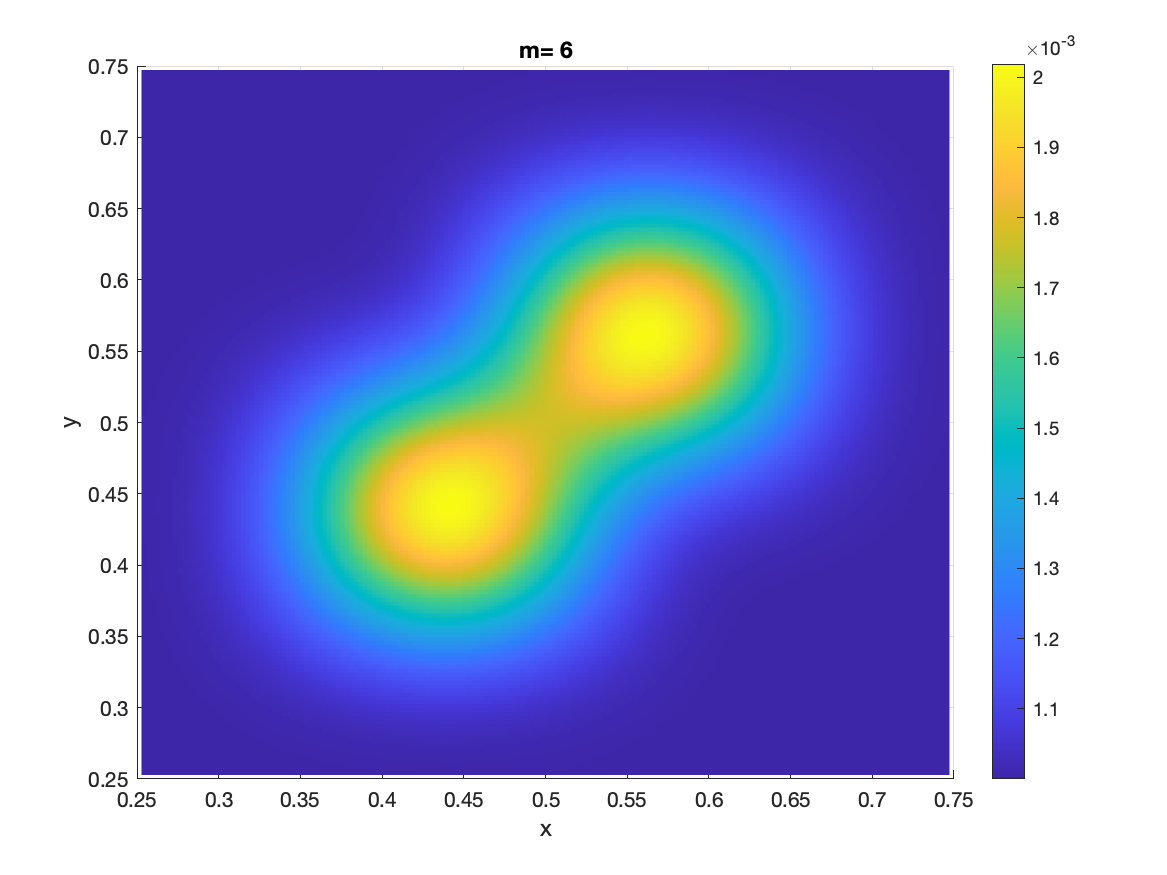}} &
   {\includegraphics[scale=0.3, trim = 0.0cm 0.0cm  0.0cm 0.0cm,  clip=]{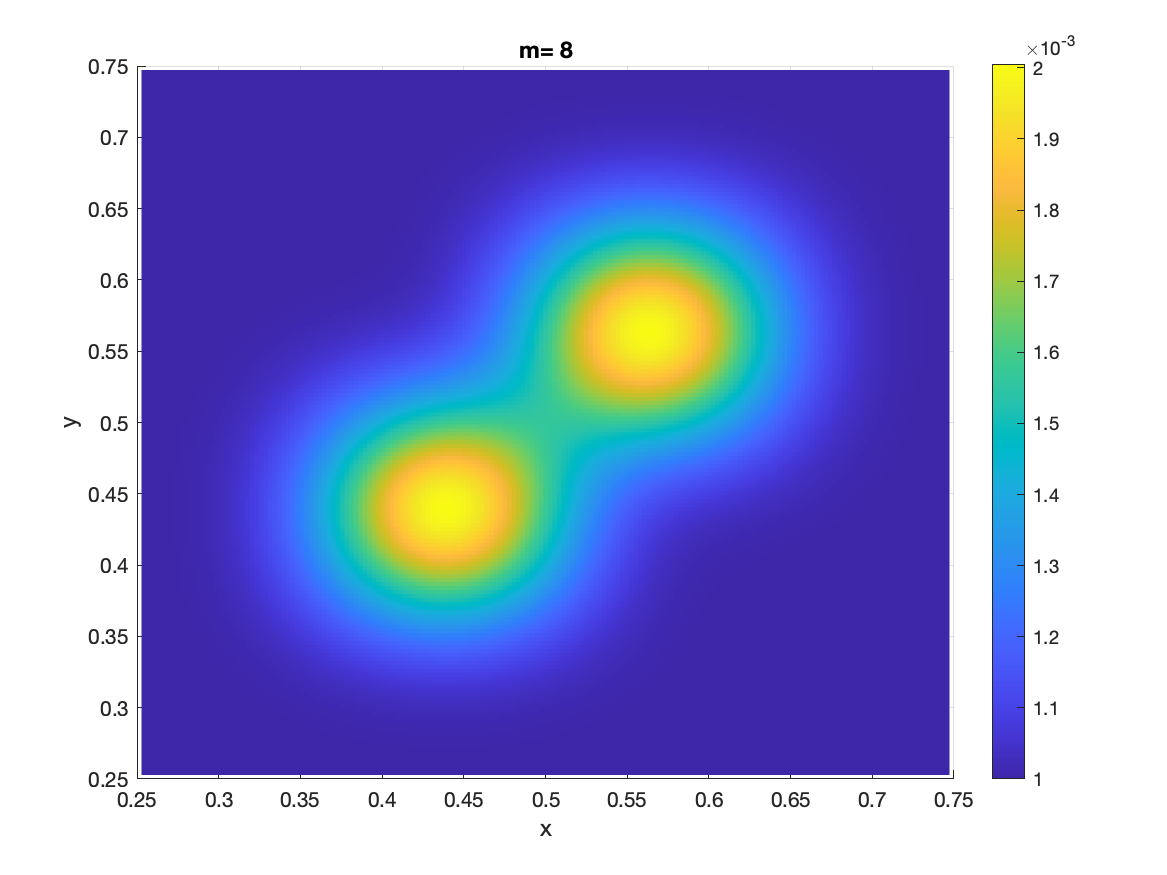}} \\
   a)  $m=6$   & b) $ m=8$  \\
    {\includegraphics[scale=0.3,  trim = 0.0cm 0.0cm 0.0cm 0.0cm, clip=]{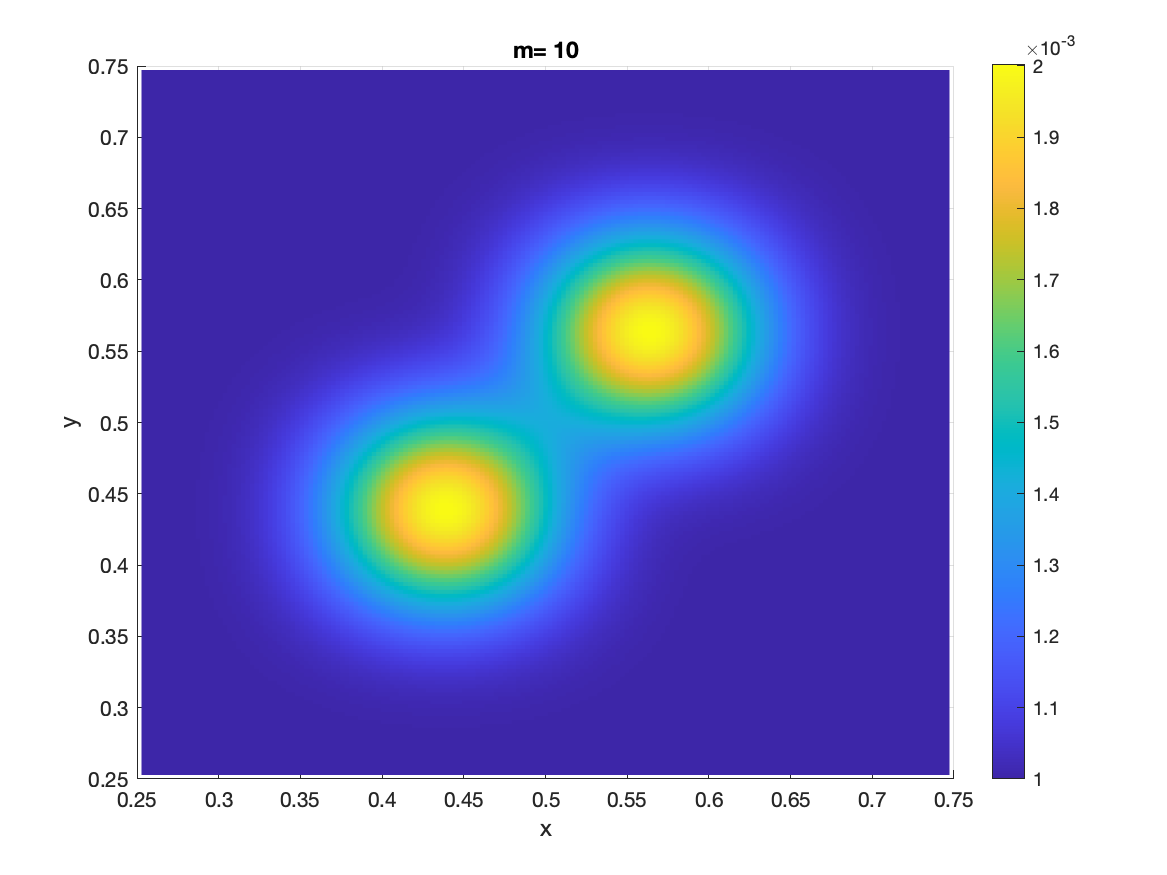}} &
   {\includegraphics[scale=0.3, trim = 0.0cm 0.0cm 0.0cm 0.0cm,  clip=]{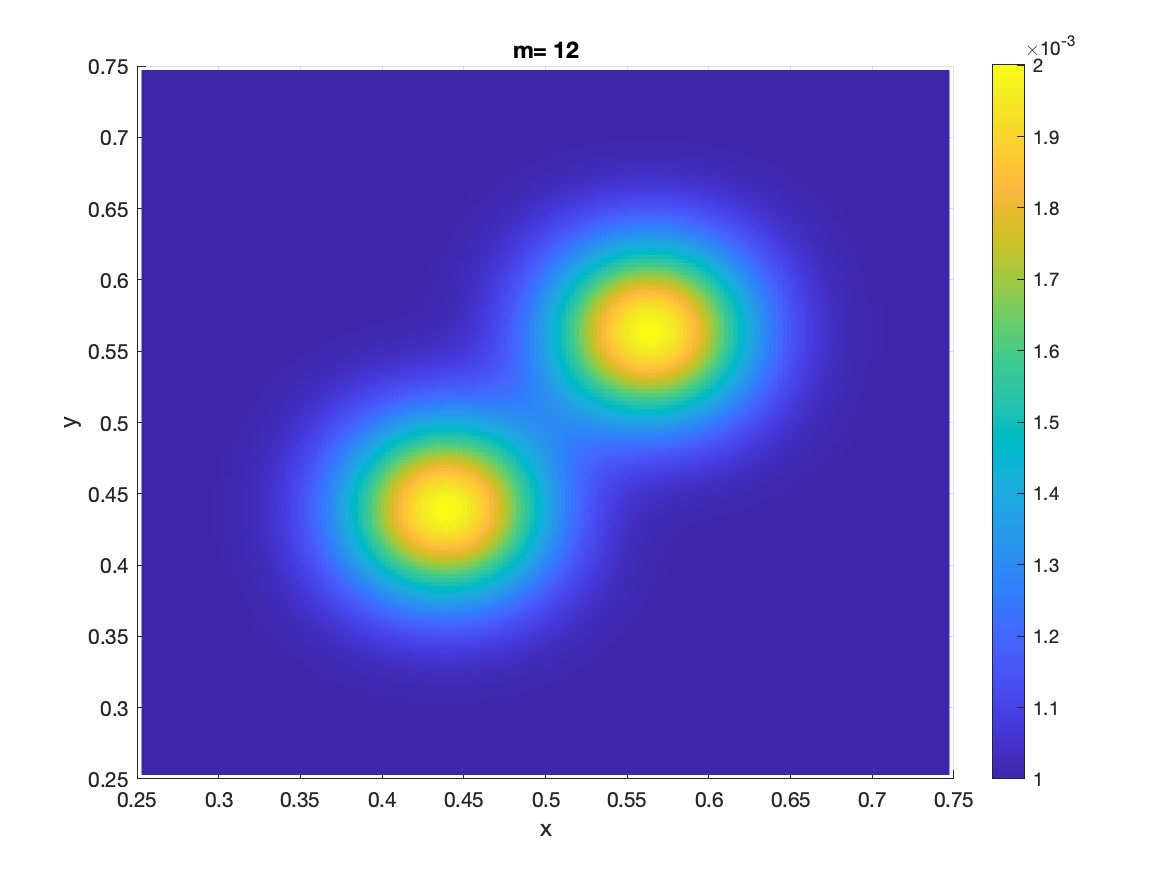}} \\
   c)  $m=10$  & d) $m=12$ 
\end{tabular}
\end{center}
\caption{a) The function $\sigma(x,y)$ in the domain $\Omega_1 =[0.25,0.75] \times[0.25,0.75]$  for different
    values of $m$ in \eqref{eps}}.
   \label{fig:F2}
\end{figure}

\begin{figure}[h!]
\begin{center}
\begin{tabular}{cc}
   {\includegraphics[scale=0.4,  trim = 0.0cm 0.0cm  0.0cm 0.0cm, clip=]{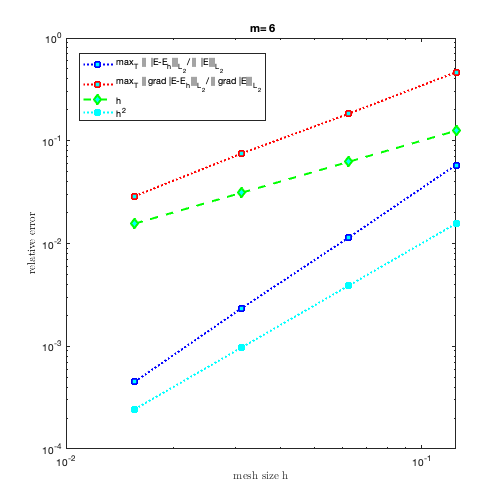}} &
   {\includegraphics[scale=0.4, trim = 0.0cm 0.0cm  0.0cm 0.0cm,  clip=]{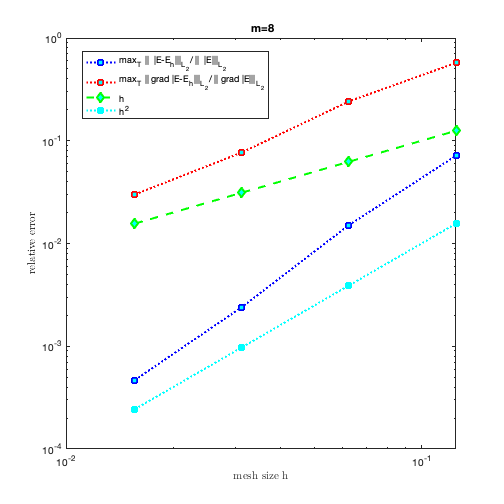}} \\
   a)  $m=6$   & b) $ m=8$  \\
    {\includegraphics[scale=0.4,  trim = 0.0cm 0.0cm 0.0cm 0.0cm, clip=]{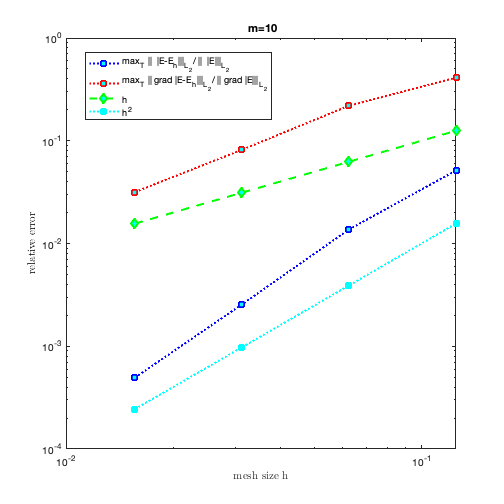}} &
   {\includegraphics[scale=0.4, trim = 0.0cm 0.0cm 0.0cm 0.0cm,  clip=]{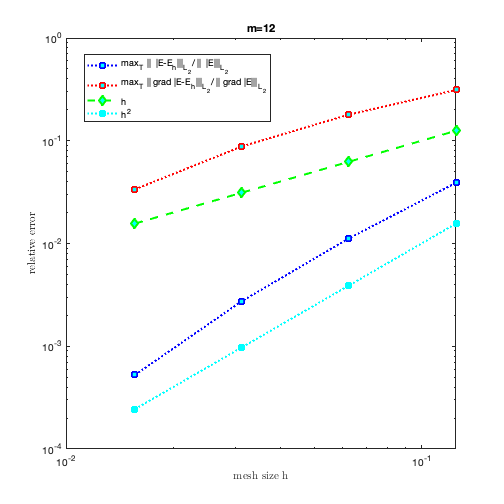}} \\
   c)  $m=10$  & d) $m=12$ 
\end{tabular}
\end{center}
\caption{  Relative errors for different $m$  in \eqref{eps}, \eqref{sigma}}. 
   \label{fig:F3}
\end{figure}

In this section we perform computations which will  confirm  theoretical predictions given in Theorem 4.
All computations are performed in the software package WavES \cite{waves}  using C++/PETSC \cite{petsc}.
The computational domain  $\Omega  \times (0,T)$   is chosen as $\Omega =
[0,1] \times [0,1]$  with $\Omega_1 = [0.25, 0.75] \times [0.25, 0.75]$
such that $ \Omega_1   \subset \Omega$.
 To discretize   the computational domain $\Omega$  
 we denote
 by ${{\mathcal T}_h}_l := \{K\}$ a partition of the domain $\Omega$ into
 triangles $K$ of sizes $h_l= 2^{-l}, l=3,...,6$.
 The explicit  finite element scheme   \eqref{forwfem5}
  derived in \cite{BL2}  was used in computations.
 We have chosen the time step $\tau = 0.0005$  such that the whole  explicit scheme remains stable.

We have used following time-dependent model problem  in  computations:
\begin{equation}\label{E_gauge2}
\begin{split}
  \varepsilon({\mathbf x}) \frac{\partial^2 E({\mathbf x},t)}{\partial t^2} +  
\nabla \times \nabla \times E({\mathbf x},t) +  \sigma({\mathbf x})  \frac{\partial E({\mathbf x},t)}{\partial t}    &= f({\mathbf x},t),  \\
   \nabla \cdot(\varepsilon E)({\mathbf x},t) &= 0, \\
   E({\mathbf x},0) = 0, ~~~\frac{\partial E}{\partial t}({\mathbf x},0) &= 0, \\
    E|_{\Gamma}=0.
\end{split}
\end{equation}
The source data
 $f({\mathbf x},t), ~ {\mathbf x}:=(x,y) \in \mathbb{R}^{2},~ t \in [0,0.25]$ is
computed by knowing the exact solution
\begin{equation}\label{exactsol}
  \begin{split}
    E_1({\mathbf x},t) &=    \frac{t^2}{\varepsilon}  \pi \sin^2 \pi x  \cos \pi y  \sin \pi y, \\
   E_2({\mathbf x},t) &=  - \frac{t^2}{\varepsilon}   \pi   \sin^2 \pi y \cos \pi x   \sin \pi x   
  \end{split}
\end{equation}
of the problem  \eqref{E_gauge2}.

 In the model problem
 \eqref{E_gauge2}
 the function $\varepsilon(x,y)$ is defined   as
\begin{equation}\label{eps}
  \varepsilon(x,y)= \left \{
  \begin{array}{lll}
    1 &+ (\sin \pi (2x-0.375))^m \cdot (\sin \pi (2y-0.375))^m  & \\
      &+  (\sin \pi (2x-0.625))^m \cdot (\sin \pi (2y-0.625))^m & \textrm{in  $\Omega_1$}, \\
    1 &  &\textrm{otherwise,}
  \end{array}
  \right.
\end{equation}
 and the function $\sigma(x,y)$    as
\begin{equation}\label{sigma}
  \sigma(x,y)= \left \{
  \begin{array}{lll}
    0.001(1 &+ (\sin \pi (2x-0.375))^m \cdot (\sin \pi (2y-0.375))^m & \\
            &+  (\sin \pi (2x-0.625))^m \cdot (\sin \pi (2y-0.625))^m) & \textrm{in  $\Omega_1$}, \\
    0 &  & \textrm{otherwise}
  \end{array}
  \right.
\end{equation}
Figures \ref{fig:F1},  \ref{fig:F2}    show the functions $\varepsilon$  and $\sigma$, respectively,
for   different $m = 6,8,10,12$ in  \eqref{eps}, \eqref{sigma}  which were used in computations.

Relative errors  $\ \Theta^{(1)} , \Theta^{(2)} $
are computed at the time moment $t=0.25$ as
\begin{align}
\ \Theta^{(1)}  &=  \frac{\|  \hat{E}  -  \hat{E}_h \|_{L_2}}{\| \hat{E} \|_{L_2}},\qquad 
\mbox{and} \\
\Theta^{(2)}  &=  \frac{\| \nabla (\hat{E}- \hat{E}_h) \|_{L_2}}{\| \nabla \hat{E} \|_{L_2}},  
\end{align}
 in  $L_2$- and 
 $H^1$-norms,
 respectively.
Here, $\hat{E} = (\hat{E}_{1}, \hat{E}_{2})$ is the exact solution
given by \eqref{exactsol}, and $\hat{E}_h = (\hat{E}_{1h},
\hat{E}_{2h})$ is the computed solution. We note also that
\begin{equation}\label{AbsE}
  \abs{\hat{E}}:=\sqrt{\hat{E}_{1}^2 + \hat{E}_{2}^2}\qquad
   \abs{\hat{E}_h}:=\sqrt{\hat{E}_{1h}^2 + \hat{E}_{2h}^2}. 
\end{equation}

Figures \ref{fig:F3} present convergence results of explicit finite element scheme
    \eqref{forwfem5}  
for the functions $\varepsilon$ and $\sigma$   defined by \eqref{eps},
\eqref{sigma}, respectively, for different values of $m = 6, 8, 10, 12$.
Table 1, Table 2, Table 3 and Table 4 present
relative errors   $ \Theta^{(j)}_l, ~j=1,2 $  and convergence rates  $ r^{(j)}_l, ~j=1,2 $ in the $L_2$-norm and in the $H^1$-norm for
mesh sizes $h_l= 2^{-l}, l=3,...,6$,
for different values of  $m = 6,8,10,12$ in \eqref{eps},\eqref{sigma}.
We note that chosen values of $m$ satisfy the regularity assumptions on the exact solution, see details in \cite{BR1}.

We used following  expressions to compute convergence
rates $r^{(1)}$ and $r^{(2)}$   presented in these figures  and tables:
\begin{equation}
  \begin{split}
  r^{(1)} &= \frac{ \left|\log \left( \frac{   \Theta^{(1)}_{l} }{   \Theta^{(1)} _{l+1}} \right) \right|}{|\log(2)|},\\
  r^{(2)} &= \frac{ \left|\log \left( \frac{  \Theta^{(2)}_{l}}{ \Theta^{(2)} _{l+1}} \right) \right|}{|\log(2)|},
  \end{split}
  \end{equation}
where $ \Theta^{(j)}_l, ~j=1,2 $ are computed relative norms $ \Theta^{(j)}, ~j=1,2 $
on the mesh ${\mathcal T}_h$ with the  mesh sizes $h_l= 2^{-l}, l=3,...,5$.

\begin{table}[h!] 
\center
\begin{tabular}{ | l | l  |  l  |  l | l  |  l  |  l | l  |  l |  }
\hline
$l$ &  $nel$  & $nno$ &  $ \ \Theta^{(1)} $  &  $\frac{  \ \Theta^{(1)} _l}{  \ \Theta^{(1)} _{l+1}}$ & $r^{(1)}$ & $ \Theta^{(2)} $  & $\frac{  \Theta^{(2)} _l}{ \Theta^{(2)} _{l+1}}$ & $r^{(2)}$ \\
\hline 
$3$ & $128$ & $81$    &  $0.058066$  &     -            &     -     &  $0.464524$  &  -  &  -  \\
$4$ & $512$ & $289$   &  $0.011481$  &      $5.057543$  &   $2.34$  &  $0.183696$  &  $2.528771$  &$1.34$  \\
$5$ & $2048$ & $1089$ &  $0.002355$  &      $4.875048$  &   $2.29$  &  $0.075362$  &  $2.437524$  &$1.29 $  \\
$6$ & $8192$ & $4225$ &  $0.000453$  &      $5.202624$  &   $2.38$  &  $0.028971$  &  $2.601312$  &$1.38  $  \\
\hline
\end{tabular}
\caption{Relative errors   $ \Theta^{(j)}_l, ~j=1,2 $  and convergence rates  $ r^{(j)}_l, ~j=1,2 $ in the $L_2$-norm and in the $H^1$-norm for
  mesh sizes $h_l= 2^{-l}, l=3,...,6$,  for $m=6$ in \eqref{eps},\eqref{sigma}.}
\label{testm6}
\end{table}

\begin{table}[h!] 
\center
\begin{tabular}{ | l | l  |  l  |  l | l  |  l  |  l | l  |  l |  }
\hline
$l$ &  $nel$  & $nno$ &  $ \ \Theta^{(1)} $  &  $\frac{  \ \Theta^{(1)} _l}{  \ \Theta^{(1)} _{l+1}}$ & $r^{(1)}$ & $ \Theta^{(2)} $  & $\frac{  \Theta^{(2)} _l}{ \Theta^{(2)} _{l+1}}$ & $r^{(2)}$ \\
\hline 
$3$ & $128$ & $81$    &  $0.071545$  &     -            &     -     &  $0.572362$  &  -  &  -  \\
$4$ & $512$ & $289$   &  $0.015110$  &      $4.735050$  &   $2.24$  &  $0.241756$  &  $2.367525$  &$1.24$  \\
$5$ & $2048$ & $1089$ &  $0.002406$  &      $6.280222$  &   $2.65$  &  $0.076989$  &  $3.140111$  &$1.65 $  \\
$6$ & $8192$ & $4225$ &  $0.000469$  &      $5.130590$  &   $2.36$  &  $0.030012$  &  $2.565295$  &$1.36  $  \\
\hline
\end{tabular}
\caption{Relative errors   $ \Theta^{(j)}_l, ~j=1,2 $  and convergence rates  $ r^{(j)}_l, ~j=1,2 $ in the $L_2$-norm and in the $H^1$-norm for
  mesh sizes $h_l= 2^{-l}, l=3,...,6$,  for $m=8$ in \eqref{eps},  \eqref{sigma}.}
\label{testm8}
\end{table}

\begin{table}[h!] 
\center
\begin{tabular}{ | l | l  |  l  |  l | l  |  l  |  l | l  |  l |  }
\hline
$l$ &  $nel$  & $nno$ &  $ \ \Theta^{(1)} $  &  $\frac{  \ \Theta^{(1)} _l}{  \ \Theta^{(1)} _{l+1}}$ & $r^{(1)}$ & $ \Theta^{(2)} $  & $\frac{  \Theta^{(2)} _l}{ \Theta^{(2)} _{l+1}}$ & $r^{(2)}$ \\
\hline 
$3$ & $128$ & $81$    &  $0.051348$  &     -            &     -     &  $0.410785$  &  -  &  -  \\
$4$ & $512$ & $289$   &  $0.013703$  &      $3.747278$  &   $1.91$  &  $0.219245$  &  $1.873639$  &$0.91$  \\
$5$ & $2048$ & $1089$ &  $0.002553$  &      $5.367863$  &   $2.42$  &  $0.081688$  &  $2.683932$  &$1.42 $  \\
$6$ & $8192$ & $4225$ &  $0.000495$  &      $5.156936$  &   $2.37$  &  $0.031681$  &  $2.578468$  &$1.37$  \\
\hline
\end{tabular}
\caption{Relative errors     $ \Theta^{(j)}_l, ~j=1,2 $  and convergence rates  $ r^{(j)}_l, ~j=1,2 $ in the $L_2$-norm and in the $H^1$-norm for
  mesh sizes $h_l= 2^{-l}, l=3,...,6$,  for $m=10$ in \eqref{eps},  \eqref{sigma}.}
\label{testm10}
\end{table}

\begin{table}[h!] 
\center
\begin{tabular}{ | l | l  |  l  |  l | l  |  l  |  l | l  |  l |  }
\hline
$l$ &  $nel$  & $nno$ &  $ \ \Theta^{(1)} $  &  $\frac{  \ \Theta^{(1)} _l}{  \ \Theta^{(1)} _{l+1}}$ & $r^{(1)}$ & $ \Theta^{(2)} $  & $\frac{  \Theta^{(2)} _l}{ \Theta^{(2)}_{l+1}}$ & $r^{(2)}$ \\
\hline 
$3$ & $128$ & $81$    &  $0.038995$  &     -            &     -     &  $0.311959$  &  -  &  -  \\
$4$ & $512$ & $289$   &  $0.011230$  &      $3.472240$  &   $1.80$  &  $0.179688$  &  $1.736128$  &$0.80$  \\
$5$ & $2048$ & $1089$ &  $0.002753$  &      $4.078874$  &   $2.03$  &  $0.088106$  &  $2.039437$  &$1.03 $  \\
$6$ & $8192$ & $4225$ &  $0.000526$  &      $5.238828$  &   $2.39$  &  $0.033636$  &  $2.619414$  &$1.39$  \\
\hline
\end{tabular}
\caption{Relative errors    $ \Theta^{(j)}_l, ~j=1,2 $  and convergence rates  $ r^{(j)}_l, ~j=1,2 $
  in the $L_2$-norm and in the $H^1$-norm for
  mesh sizes $h_l= 2^{-l}, l=3,...,6$,  for $m=12$ in \eqref{eps},  \eqref{sigma}.}
\label{testm12}
\end{table}

Using  Figures  \ref{fig:F3}   and tables we observe that  the explicit finite element scheme
derived in   \cite{BL2} behaves like a first order method in
 in $H^1(\Omega)$-norm and second
 order method in $L_2(\Omega)$-norm.
Therefore, these results  confirm theoretical analytic   estimates derived in Theorem 4.

\section{Conclusions}

\label{sec:conclusion}

This paper presents stability and convergence analysis for the finite element method for
stabilized time-dependent
Maxwell's equations in conductive nonmagnetic media developed in  \cite{BL2}.
We present analysis for a specific case when
the dielectric permittivity  and conductivity functions
have
 a constant value in a
 boundary neighborhood.
 
 In the theoretical part of the paper  we derived energy norm stability estimates
for the continuous and discrete solutions of the model problem, as well as 
 a priori error bounds in
 the gradient dependent, weighted  norms.  Our numerical
 computations confirm theoretical predictions and show that our method
 behaves like a first order method in $H^1(\Omega)$-norm and second
 order method in $L_2(\Omega)$-norm.

\vspace{0.5cm}

\noindent \textbf{\underline{Acknowledgment}} The research of both authors
is supported by the Swedish Research Council grant VR 2018-03661.

\rule{2mm}{2mm}


\end{document}